\documentclass[11pt,reqno]{amsart}
\usepackage[T1]{fontenc}
\usepackage{lmodern}
\usepackage{microtype}
\usepackage[a4paper,margin=27mm,headheight=14pt]{geometry}
\usepackage{amsmath,amssymb,mathtools,booktabs,array}
\usepackage{needspace}
\usepackage{xcolor}
\usepackage[colorlinks=true,linkcolor=blue!45!black,citecolor=blue!45!black,urlcolor=blue!45!black]{hyperref}
\hypersetup{pdftitle={Complex Curvature and Schatten Embeddings},pdfauthor={}}
\newtheorem{theorem}{Theorem}[section]
\newtheorem{proposition}[theorem]{Proposition}
\newtheorem{corollary}[theorem]{Corollary}
\newtheorem{lemma}[theorem]{Lemma}
\theoremstyle{definition}
\newtheorem{definition}[theorem]{Definition}
\theoremstyle{remark}
\newtheorem{remark}[theorem]{Remark}
\newcommand{\C}{\mathbb C}
\newcommand{\R}{\mathbb R}
\newcommand{\N}{\mathbb N}
\newcommand{\HH}{\mathcal H}
\newcommand{\KK}{\mathcal K}
\newcommand{\Sp}{\mathcal S}
\newcommand{\Tr}{\operatorname{Tr}}
\newcommand{\rank}{\operatorname{rank}}
\newcommand{\op}{\mathrm{op}}

\newcommand{\dd}{\,d}
\newcommand{\norm}[1]{\left\lVert#1\right\rVert}
\newcommand{\mean}[1]{\frac{1}{2\pi}\int_0^{2\pi}#1\dd\theta}

\numberwithin{equation}{section}
\allowdisplaybreaks[1]
\title[Further Results on the Isometric Embeddability of \(S_q^m\) into \(S_p^n\)]{Further Results on the Isometric Embeddability of \(S_q^m\) into \(S_p^n\)}
\author{}
\date{September 26, 2026}
\subjclass[MSC.]{46B04, 46L51}
\keywords{Isometric embedding; Schatten-$p$ classes; Quasi-Banach spaces}

\begin{document}

\maketitle

\begin{center}
	{\large Ying Xu$^{1}$, Wenwen Zhang$^{1}$ and Qi Liu$^{1*}$}\par
	\vspace{0.8em}
	
	$^{1}$School of Mathematics and Statistics, Anqing Normal University,
	Anqing 246133, Anhui, China.\par
	
	\vspace{1.0em}
	
	\textit{Corresponding author(s).} E-mail(s):
	\href{mailto:liuq67@aqnu.edu.cn}{liuq67@aqnu.edu.cn};\par
	
	\textit{Contributing authors:}
	\href{mailto:Y26220046@stu.aqnu.edu.cn}{Y26220046@stu.aqnu.edu.cn};
	\href{mailto:Y26220004@stu.aqnu.edu.cn}{Y26220004@stu.aqnu.edu.cn}.
	\par
\end{center}

\vspace{1.2em}
	
\begin{abstract}
We study Schatten embeddings through circular means of trace powers. In finite dimensions, a decomposition by singular-value vanishing orders shows that every term of order at most two has a nonnegative coefficient. This yields the classification $\ell_q^2(\C)\hookrightarrow\Sp_p^n$ if and only if $q=p$ or $q=2$, for $n\ge2$ and $p<\infty$. In particular, it gives negative answers to parts (ii), (iii), and (iv) of Problem 3.1 in Chattopadhyay--Pradhan--Skripka, arXiv:2603.07359v2. A regularized curvature estimate also excludes $\ell_q^2$ from infinite-dimensional $\Sp_p$ for $0<p\le2<q\le\infty$ and gives finite-rank distortion bounds. The other parts of that problem are not claimed to be solved.Our results extend previous studies on Schatten embeddings, particularly those in
\cite{CHPPR,
	CHPR,
CPS},
by developing a complex-curvature framework based on complex convexity
theories \cite{BR,DGT},
which resolves several remaining quasi-Banach cases and yields new geometric
obstructions for isometric embeddings.
\end{abstract}

\maketitle

\section{Introduction and principal result}

An isometric embedding preserves the entire norm geometry of a space, 
including the local behavior of its unit sphere along two-dimensional sections. 
For matrix norms, this local behavior records more than the singular values of 
individual matrices: it also detects how singular values change under 
perturbations. The problem of embedding one Schatten class into another 
therefore brings together Banach-space geometry, matrix perturbation theory, 
and the analysis of trace functions. The distinction between real and complex 
perturbations is particularly useful in this setting. A norm can be flat along 
a real line while having strictly positive average growth around every 
sufficiently small complex circle. The theory of isometric embeddings has a 
long history in Banach space geometry. Classical results of Banach and 
Lamperti characterized isometries on $\ell_p$ and $L_p$ spaces, respectively, 
in the non-Hilbertian setting \cite{Banach,Lamperti}. The study of embeddings 
between different $L_p$ spaces has further established deep connections with 
probability theory, harmonic analysis, and the geometry of Banach spaces. In 
particular, the works of L\'evy and Schoenberg revealed fundamental connections 
between positive definite functions, stable random variables, and isometric 
embeddings into $L_p$ spaces \cite{Levy,Schoenberg}.

Structural properties of subspaces of $L_p$ spaces were investigated by 
Rosenthal and later by Delbaen, Jarchow, and Pe\l czy\'nski, who characterized 
subspaces admitting isometric embeddings into $\ell_p$ or finite-dimensional 
$\ell_p$ spaces \cite{Rosenthal,DelbaenJarchowPelczynski}. Finite-dimensional 
$\ell_p$ embeddings have also been extensively studied, where strong 
restrictions on the parameters and quantitative dimension estimates were 
obtained, with close connections to spherical designs and combinatorial 
structures \cite{Lyubich2008,Lyubich2009}. With the development of 
non-commutative analysis, these classical embedding problems have been extended 
to non-commutative $L_p$ spaces and Schatten classes, where new geometric 
phenomena arise from the underlying operator structure \cite{Yeadon,JungeParcet,Xu}. 
These developments have led to the recent study of isometric embeddings between 
Schatten classes, connecting Banach space geometry with operator theory and 
non-commutative $L_p$ geometry \cite{CHPPR,CHPR}.

For a complex Hilbert space $\HH$ and $0<p<\infty$, write
\[
 \Sp_p(\HH)=\left\{A\text{ compact}:\sum_{j\ge1}s_j(A)^p<\infty\right\},
 \qquad \norm{A}_p^p=\Tr|A|^p.
\]
Here $s_j(A)$ are the singular values, counted with multiplicity. We set $\Sp_\infty(\HH)=\mathcal K(\HH)$ and use the operator norm $\norm{\cdot}_\op$. For $\HH=\C^n$ we write $\Sp_p^n$. The Hilbert--Schmidt norm is $\norm{\cdot}_2$. All isometries and embeddings below are \emph{complex-linear}, unless explicitly stated otherwise. In particular, $\ell_q^2$ means $\ell_q^2(\C)$.

Chattopadhyay, Hong, Pal, Pradhan, and Ray studied finite-dimensional
Schatten embeddings using analytic perturbation and trace differentiation
\cite{CHPPR}. The quasi-Banach range was subsequently investigated by
Chattopadhyay, Hong, Pradhan, and Ray \cite{CHPR}. Their non-embeddability
results cover many parameters, but leave several cases untreated. The survey
of Chattopadhyay, Pradhan, and Skripka \cite{CPS} collected the remaining
questions in Problem 3.1 concerning the existence of isometric embeddings
between Schatten classes. In particular, the unresolved cases include(the original numbering of Problem~3.1 in \cite{CPS} is retained):

(ii) Do $\Sp_3^m$ embeds isometrically into $\Sp_1^n$;

(iii) Do $\Sp_1^m$ and $\Sp_\infty^m$ embed isometrically into
$\Sp_p^n$ for
$p\in\{\frac{1}{k}:k\in\mathbb N\setminus\{1\}\}$;

(iv) Does $\Sp_q^m$ embeds isometrically into $\Sp_p^n$ for
$q\in(2,\infty)$ and $p\in(0,1)$.

The present work addresses these cases by developing a complex-curvature
framework for Schatten embeddings. Combining circular mean estimates with
finite-dimensional holomorphic expansions, we obtain new non-embeddability
criteria and resolve the above problems.

The relevant geometric background is classical. Davis, Garling, and Tomczak-Jaegermann developed uniform complex convexity for quasi-normed spaces \cite{DGT}. Blower and Ransford related complex convexity to Riesz measures of trace functions and proved quantitative estimates for $1\le p\le2$ \cite{BR}. The present argument uses the same broad principle---positive complex curvature---but retains a Hilbert--Schmidt term. This weaker choice of control permits a direct calculation throughout $0<p<1$, without asserting uniform control by $\norm{B}_p^2$.

\Needspace{9\baselineskip}
\begin{theorem}\label{thm:main}
Let $0<p\le2$ and $2<q\le\infty$. For every complex Hilbert space $\KK$, there is no complex-linear isometric embedding
\[
 \ell_q^2\longrightarrow\Sp_p(\KK).
\]
Consequently, for every $m\ge2$ and every finite $n$, there is no complex-linear isometric embedding $\Sp_q^m\to\Sp_p^n$. More generally, if $\dim\HH\ge2$, there is no such embedding $\Sp_q(\HH)\to\Sp_p(\KK)$.
\end{theorem}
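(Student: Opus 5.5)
The plan is to compare circular means of $p$-th powers of norms on the two sides. Suppose $T:\ell_q^2\to\Sp_p(\KK)$ is a complex-linear isometry, and put $A=Te_1$ and $B=Te_2$, so that $\norm{A}_p=1$ and $B\neq0$. For small $r>0$ I would consider
\[
 M(r)=\mean{\norm{A+re^{i\theta}B}_p^p}.
\]
On the $\ell_q^2$ side this equals $\mean{(1+r^q)^{p/q}}=1+\tfrac{p}{q}r^q+O(r^{2q})$ when $q<\infty$, and it is identically $1$ when $q=\infty$, for $r<1$. Since $q>2$, in either case $M(r)-1=o(r^2)$. The contradiction will come from showing that on the Schatten side $M(r)-1\ge c\,r^2$ for some $c>0$ depending on $A$ and $B$.

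The heart of the argument is a lower bound on complex curvature for $0<p\le2$:
\[
 \mean{\norm{A+re^{i\theta}B}_p^p}\ \ge\ \norm{A}_p^p+c(A,B)\,r^2+o(r^2),\qquad c(A,B)>0 \text{ whenever } B\ne0.
\]
I would first prove this in finite dimensions, which covers the finite-rank case. I would write $\norm{X}_p^p=\Tr\,(X^*X)^{p/2}$ and use the holomorphic (Kato–Rellich) expansion of the eigenvalues of $(A+zB)^*(A+zB)$ in $z$ and $\bar z$. The eigenvalues are grouped by their vanishing order at $z=0$, which is the decomposition by singular-value vanishing orders mentioned in the abstract.

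For nonzero singular values of $A$, the function $z\mapsto s_j(A+zB)^p$ is real-analytic near $0$. Its circular mean picks up only the $|z|^2$ coefficient, which is the Laplacian term. Because $t\mapsto t^{p/2}$ composed with the plurisubharmonic structure of $\log\det$-type functions is subharmonic, this coefficient is nonnegative; alternatively one can use $\Tr f(X^*X)$ with $f$ operator-monotone-related. For singular values that vanish at $0$ to order one, the contribution behaves like $|z|^p\,|\cdot|^p$, which for $p\le2$ dominates $r^2$ and is nonnegative. Orders above two contribute $o(r^2)$ and can be discarded. This is where the Hilbert–Schmidt control enters: the total $r^2$ coefficient is bounded below by a multiple of $\norm{P B Q}_2^2$-type quantities, plus the $r^p$ term if $B$ has a component in the kernel directions. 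One of these is nonzero whenever $B\ne0$.

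For infinite-dimensional $\KK$, I would use a regularized version of this estimate: compress to finite-rank truncations $P_NAP_N$ and $P_NBP_N$, apply the finite-dimensional bound with constants that are uniform in $N$ (this needs the Hilbert–Schmidt-type lower bound rather than a $\norm{B}_p^2$ bound), and pass to the limit using continuity of $\norm{\cdot}_p$ in $\Sp_p$.

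I expect the main obstacle to be proving nonnegativity together with strict positivity of the $r^2$ coefficient in the quasi-Banach range $p<1$, where convexity arguments fail. My first attempt would be an explicit second-order perturbation formula for the mean of $\Tr\,|A+zB|^p$ via the Daleckii–Krein divided differences of $t\mapsto t^{p/2}$. In a singular-value basis of $A$ the coefficient becomes
\[
 \sum_{j,k}\Phi_p(s_j,s_k)\,|b_{jk}|^2,
\]
and the task is to check that $\Phi_p>0$ for $0<p\le2$, including the off-diagonal terms that pair a nonzero singular value with the kernel of $A$.

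Finally, the consequences follow by restriction. If $m\ge2$ or $\dim\HH\ge2$, then $\Sp_q^m$ and $\Sp_q(\HH)$ contain $\ell_q^2$ isometrically as the diagonal matrices $\operatorname{diag}(x_1,x_2,0,\dots)$. Composing this inclusion with a hypothetical embedding would give an isometric embedding $\ell_q^2\to\Sp_p(\KK)$, which contradicts the first part of the theorem.
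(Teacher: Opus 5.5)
\textbf{Verdict: genuine gap.} Your argument works when $\KK$ is finite-dimensional (or $A,B$ have finite rank), but it does not reach infinite-dimensional $\KK$, which the statement includes.

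\textbf{Finite-dimensional part.} This part is workable and is essentially the mechanism of the paper's Section~4 (Proposition~\ref{prop:loworder}).
You split the spectrum of $(A+zB)^*(A+zB)$ into the cluster near the positive eigenvalues of $A^*A$ and the small cluster, which contributes nonnegatively.
You then compute $b=\partial_z\partial_{\bar z}v(0)$ for the cluster trace $v$ by Daleckii--Krein.
The weights are strictly positive off the kernel-to-kernel block.
If $b=0$, the pencil splits as $1+|z|^p\norm{B}_p^p$, which is not $o(r^2)$.
One correction: work with the cluster trace, not with individual $s_j(A+zB)^p$.
At a repeated positive singular value the branches need not be real-analytic in $(\operatorname{Re}z,\operatorname{Im}z)$; for $A=I_2$, $B=e_1e_2^*$ one gets $s_{1,2}=\tfrac12(\sqrt{4+|z|^2}\pm|z|)$.
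Kato--Rellich is a one-real-parameter theorem.

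\textbf{The gap: infinite-dimensional $\KK$.} Your finite-dimensional estimate is only asymptotic, $M(r)\ge\norm{A}_p^p+c\,r^2+o(r^2)$.
Both the radius on which the cluster expansion is valid and the $o(r^2)$ remainder depend on the gap between the smallest positive singular value of $A$ and $0$.
Moreover, the kernel-to-kernel part of $B$ has weight zero in $b$ and enters only through singular clusters whose size you never quantify.
For $A_N=P_NAP_N$ with $A$ of infinite rank, this gap tends to $0$.
So nothing in your scheme yields a fixed $r$ and a constant independent of $N$.
Already for $A=\operatorname{diag}(1,\delta)$, $B=\operatorname{diag}(0,1)$, the quadratic expansion of $1+\mean{|\delta+re^{i\theta}|^p}$ is valid only for $r<\delta$.
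Your phrase ``use a regularized version with constants uniform in $N$'' names the missing ingredient rather than supplying it.

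\textbf{How the paper closes it.} It proves a pointwise, non-asymptotic Hessian bound on the whole disc, not just at $z=0$.
Take $F_\varepsilon(X)=\Tr(X^*X+\varepsilon I)^{\alpha}$ with $\alpha=p/2$; this is smooth even at singular $X$.
Its Daleckii--Krein weights can be rewritten as
$\alpha\bigl[\frac{\lambda_i^\alpha-\lambda_j^\alpha}{\lambda_i-\lambda_j}-\varepsilon\frac{\lambda_i^{\alpha-1}-\lambda_j^{\alpha-1}}{\lambda_i-\lambda_j}\bigr]$, which is at least $\alpha^2(\norm{X}_\op^2+\varepsilon)^{\alpha-1}$.
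Hence $F_\varepsilon(A+zB)-c_\varepsilon|z|^2$ is subharmonic on $|z|<t$.
The sub-mean-value property and $0\le F_\varepsilon(X)-\norm{X}_p^p\le n\varepsilon^{\alpha}$ then give \eqref{eq:circleintro} for every $t>0$.
The constant depends only on $\norm{A}_\op+t\norm{B}_\op$ and $\norm{B}_2$.
These quantities are stable under compression, which is what makes the limit $N\to\infty$ legitimate.
Without this estimate, or some other dimension-free global estimate, your proposal proves the theorem only for finite-dimensional $\KK$ or finite-rank $A,B$, not for general $\Sp_p(\KK)$ or $\Sp_q(\HH)\to\Sp_p(\KK)$.
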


\noindent
\textnormal{This gives complete negative answers to Problem~3.1(ii) and (iv), and a partial negative answer to Problem~3.1(v) in \cite{CPS}.}

The theorem has no positivity, self-adjointness, rank-preservation, or finite-rank hypothesis on the images. The assertion for Schatten spaces follows by restricting an alleged isometry to the diagonal span of two orthogonal rank-one projections. The actual obstruction is therefore already two-dimensional over $\C$.

	A recent result of Hein\"avaara \cite{HeinavaaraTracial} gives a complementary
	description of two-dimensional Schatten geometry: for every $p>0$, each real
	two-dimensional subspace of $S_p$ is linearly isometric to a subspace of a
	commutative $L_p$ space. This concerns real linear combinations of two
	operators, whereas the present work studies complex-linear embeddings and the
	full complex pencil $A+zB$, $z\in\mathbb C$. Thus, Hein\"avaara's result does
	not settle the embedding problems considered here, but provides a useful
	two-dimensional comparison framework.

In finite dimensions, a more precise expansion treats the reciprocal exponents as well. The connection between vanishing orders and singular values is described by the local Smith form; see, for example, Kaveh and Makhnatch \cite{KM}. {We use this standard form and record only the singular-value consequence needed below, combining it with analytic spectral projections and angular averaging.}

\begin{theorem} \label{thm:finiteintro}
	
Let $n\ge2$, $0<p<\infty$, and $0<q\le\infty$. A complex-linear isometric embedding $\ell_q^2(\C)\to\Sp_p^n$ exists if and only if $q=p$ or $q=2$. Consequently, if $m\ge2$, $p\ne q$, and $q\ne2$, there is no complex-linear isometric embedding $\Sp_q^m\to\Sp_p^n$.
\end{theorem}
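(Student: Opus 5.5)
The existence direction is by explicit construction. For $q=p$, send $(a,b)\mapsto \mathrm{diag}(a,b,0,\dots,0)$, which uses $n\ge2$; then $\norm{\cdot}_p^p=|a|^p+|b|^p$. For $q=2$, send $(a,b)\mapsto a E_{11}+bE_{12}$. This is a rank-one matrix whose single singular value is $(|a|^2+|b|^2)^{1/2}$, so every Schatten norm returns the $\ell_2^2$ norm. The consequence for $\Sp_q^m$ will follow at the end by restricting a putative isometry to the diagonal span of $E_{11},E_{22}$, which is isometric to $\ell_q^2$. The substance is therefore the nonexistence claim: for $q\ne p$, $q\ne2$, no pair $A=T(e_1)$, $B=T(e_2)$ in $\Sp_p^n$ satisfies $\Tr|aA+bB|^p=\norm{(a,b)}_q^p$ for all $a,b\in\C$.

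The strategy is to compare circular means on both sides. First take $A$ with $\norm{A}_p=1$ and consider $\phi(z)=\Tr|A+zB|^p$. On the $\ell_q^2$ side, $\phi(z)=(1+|z|^q)^{p/q}$ for $q<\infty$, so
\[
\mean{\phi(re^{i\theta})}-1=\tfrac pq r^q+O(r^{2q}).
\]
The case $q=\infty$ gives exactly $0$ for $r<1$.

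On the Schatten side I will expand $\phi$ near $z=0$ in finite dimensions using analytic perturbation. The singular values of $A+zB$ are the square roots of the eigenvalues of $(A+zB)^*(A+zB)$, which is real-analytic in $(z,\bar z)$. I would split $\C^n$ by spectral projections of $|A|$ into the part with $s_j(A)>0$ and the kernel part.
\begin{itemize}
\item On the nonzero part, $s^p$ is smooth, and the mean $\mean{\cdot}$ kills harmonic terms. The second-order term is then a Laplacian of a subharmonic-type trace function, and it is nonnegative. This is the ``order at most two'' positivity claimed in the abstract.
\item On the kernel part, the local Smith form gives singular values of order $|z|^{k}$ times nonvanishing analytic factors, with vanishing orders $k\ge1$. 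The corresponding contributions are of size $c\,r^{kp}$ with $c\ge0$, and $c>0$ whenever such a branch actually occurs.
\end{itemize}

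Hence the Schatten mean has an expansion $\sum_\alpha c_\alpha r^\alpha$ with exponents in $\{2\}\cup\{kp\}$ (plus higher orders), and the leading coefficient is positive. Matching this with $\frac pq r^q$ forces $q\in\{2\}\cup p\N$. The same expansion also shows that the $q=\infty$ case fails unless every term vanishes identically.

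To exclude the spurious exponents $q=kp$ with $k\ge2$, I would swap roles and exploit symmetry: expand also around points $A+wB$ with $|w|=1$ and around $B$. I would also use the real-direction information. Along real lines $\norm{(1,t)}_q^p$ behaves like $|t|^q$ while $\Tr|A+tB|^p$ has order-$kp$ singular branches, so the angular coefficient must be consistent with the $\theta$-dependence of $|z|^{kp}$ versus a rotationally invariant $r^q$. My first attempt would be a rank argument: if $kp=q$ with $k\ge2$, the kernel branch comes from $B$ compressed to $\ker A$ vanishing to order $k$. That forces the compression to be zero, and the Hilbert–Schmidt/order-two term then appears with positive coefficient at order $r^2<r^{q}$ when $q>2$, or conflicts with homogeneity when $q<2$. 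I expect this bookkeeping to be the main obstacle: showing that every admissible vanishing-order pattern produces a nonzero coefficient at an exponent different from $q$. If the direct argument stalls, I would fall back on comparing several centers $A+wB$ so that all exponents except $p$ and $2$ are excluded.
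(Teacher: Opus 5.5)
\textbf{Verdict: there is a genuine gap.} Your existence constructions, the reduction from $\Sp_q^m$ to $\ell_q^2$, and the overall strategy agree with the paper. That strategy is to compare circular means, where a regular cluster contributes a nonnegative $t^2$ coefficient and Smith-form clusters contribute $a_k t^{kp}$ with $a_k>0$. But the nonexistence argument breaks down exactly where you flag it, and what is missing is not bookkeeping or comparing several centers.

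\emph{First missing fact.} You need that \emph{every} coefficient of the Schatten-side circular expansion at exponents $\le 2$ is nonnegative, not merely the leading one.
\begin{itemize}
\item This requires that, after averaging in $\theta$, the correction to each singular term $a_k t^{kp}$ is $O(t^{kp+2})$ rather than $O(t^{kp+1})$.
\item The paper proves this by showing each cluster average $G_k(t)$ is an \emph{even} real-analytic function, using $H(-t,\theta)=H(t,\theta+\pi)$ for $H=M^*M$, $M(z)=A+zB$.
\item Your ``plus higher orders'' leaves room for a term $t^{kp+1}$ of unknown sign. At $p=1/k$, $q=1$ that term sits exactly at $t^2$.
\end{itemize}
With evenness in hand, the whole range $0<q\le 1$, $p<q$ is settled by a sign, not by rank or homogeneity. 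This range contains the reciprocal-exponent cases $q=kp$, e.g.\ $\Sp_1\to\Sp_{1/k}$. The source expansion $(1+t^q)^{p/q}=1+\tfrac pq t^q+\tfrac{p(p-q)}{2q^2}t^{2q}+O(t^{3q})$ has a strictly negative coefficient at $t^{2q}$, with $2q\le 2$. For $p=\tfrac12$, $q=1$ the leading terms can match exactly ($a_2=\tfrac12$), so nothing short of this second-order sign comparison separates the two sides. Your ``conflicts with homogeneity when $q<2$'' does not supply it.

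\emph{Second missing fact.} For $q>1$ you need strict positivity, not just $b\ge 0$. The trace-Hessian formula gives
\[
b=\tfrac p2\sum_{i,j\le h}\tfrac{\lambda_i^{p/2}-\lambda_j^{p/2}}{\lambda_i-\lambda_j}|b_{ij}|^2+\tfrac p2\sum_{i\le h<j}s_i^{p-2}\bigl(|b_{ij}|^2+|b_{ji}|^2\bigr),
\]
with every coefficient positive. Hence $b=0$ if and only if $B$ lies in the kernel-to-kernel block. The argument then runs as follows:
\begin{itemize}
\item When $q>1$, $q\ne 2$ (including $q=\infty$), the source has zero $t^2$ coefficient, so $b=0$.
\item Then the pencil is block diagonal and $\|A+zB\|_p^p=1+|z|^p$ exactly.
\item This forces $p=q$.
\end{itemize}
Your rank argument uses this silently when you say ``the order-two term then appears with positive coefficient.'' It is also needed for your claim that matching forces $q\in\{2\}\cup p\N$ when $q>2$, because beyond order two the regular cluster has $t^4,t^6,\dots$ terms of uncontrolled sign.

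For instance, take $p=\tfrac12$, $q=\tfrac32$. The expansions through order two are fully consistent with $a_3=\tfrac13$ and all other low-order coefficients zero. Only the implication $b=0\Rightarrow$ block diagonal, which excludes an order-3 branch, produces the contradiction.

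Likewise, your order of steps fails for $p>2$: you first remove $B_{22}$ using the $t^p$ term, but there $t^p$ lies beyond order two. The correct order is $b=0$ first, then rigidity.
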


\noindent
\textnormal{In particular, this yields a complete negative answer to Problem~3.1(iii) in \cite{CPS}.}

This theorem concerns the admissible exponents; it does not determine the least target size for a prescribed higher-dimensional Hilbertian source. Its proof, in Section~\ref{sec:expansion}, uses a property not contained in a quadratic lower bound alone: the low-order circular expansion has no negative coefficient. In particular, for $p<1$ it cannot coincide with $(1+t)^p$, whose quadratic coefficient is negative.

The quantitative ingredient is the following inequality, proved in Section~\ref{sec:circles}. If $A,B\in\Sp_p(\KK)$, $B\ne0$, and $t>0$, then
\begin{equation}\label{eq:circleintro}
 \mean{\norm{A+t e^{i\theta}B}_p^p}-\norm{A}_p^p
 \ge \frac{p^2}{4}\,
 \frac{t^2\norm{B}_2^2}
 {(\norm{A}_\op+t\norm{B}_\op)^{2-p}}.
\end{equation}
Because $p\le2$, every member of $\Sp_p$ is Hilbert--Schmidt, so the expression is finite. For a fixed nonzero $A$ and nonzero $B$, the right-hand side has a strictly positive quadratic coefficient at $t=0$. An isometric copy of $\ell_q^2$, on the other hand, would make the left-hand side equal to
\[
 (1+t^q)^{p/q}-1=O(t^q)=o(t^2),\qquad 2<q<\infty,
\]
or identically zero for $q=\infty$ and $0<t<1$.

\paragraph{Scope of the answers.}\label{sec:scope}
Theorems~\ref{thm:main} and \ref{thm:finiteintro} give complete negative answers to Problem 3.1(ii), (iii), and (iv). They also exclude the ranges $2<q\le\infty$, $0<p<1$, and $2<q<4$, $p=1$, in (v). They do not settle (i), (vi), or the remaining portions of (v). These distinctions are recorded in Section~\ref{sec:scope}.

There is also an essential distinction between a consequence and a new geometric principle. At $p=1$, Theorem~\ref{thm:main} follows directly from the classical $2$-uniform PL-convexity of the trace class. Thus the resolution of (ii) is an application of established theory, rather than a claim that complex convexity is new. The self-contained regularization argument below provides a unified proof, including the quasi-Banach range, and explicit rank-dependent bounds. No classification of all Schatten embeddings is asserted.

\section{A regularized trace Hessian}\label{sec:hessian}

For $z=x+iy$, we use the standard Wirtinger derivatives
\[
\partial_z=\frac12(\partial_x-i\partial_y),
\qquad
\partial_{\bar z}=\frac12(\partial_x+i\partial_y),
\qquad
\Delta=4\partial_z\partial_{\bar z}.
\]
For the matrix-function differentiation used below, we refer to
\cite{Higham2008}.

For a differentiable scalar function $h$ on $(0,\infty)$, put
\[
 h^{[1]}(a,b)=
 \begin{cases}
 \dfrac{h(a)-h(b)}{a-b},&a\ne b,\\[6pt]
 h'(a),&a=b.
 \end{cases}
\]
The use of positive regularization is important: differentiating $\Tr|X|^p$ directly at a singular matrix is generally invalid, particularly for $p<1$.

\begin{lemma}\label{lem:trace}
Let $H(z)$ be a twice continuously differentiable Hermitian matrix-valued function near $z=0$. Suppose that $H(0)=\operatorname{diag}(\lambda_1,\ldots,\lambda_n)$ and let $f$ be analytic on a complex neighborhood of its spectrum; the neighborhood may be disconnected. Then
\begin{align}\label{eq:tracehessian}
 \left.\partial_z\partial_{\bar z}\Tr f(H(z))\right|_{z=0}
 &=\Tr\bigl(f'(H(0))H_{z\bar z}(0)\bigr)\notag\\
 &\quad+\sum_{i,j=1}^n(f')^{[1]}(\lambda_i,\lambda_j)
 (H_z(0))_{ij}(H_{\bar z}(0))_{ji}.
\end{align}
Repeated eigenvalues are permitted.
\end{lemma}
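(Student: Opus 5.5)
We will prove \eqref{eq:tracehessian} with the holomorphic functional calculus, which avoids any eigenvalue-tracking and so handles repeated eigenvalues automatically. Fix a contour $\Gamma$, a finite union of positively oriented cycles inside the domain of analyticity of $f$, winding once around each $\lambda_i$ and zero times around every point outside that domain. By continuity of $H$, the spectrum of $H(z)$ stays inside $\Gamma$ for $|z|$ small. Writing $R(\zeta,z)=(\zeta-H(z))^{-1}$, we have
\[
\Tr f(H(z))=\frac{1}{2\pi i}\oint_\Gamma f(\zeta)\,\Tr R(\zeta,z)\dd\zeta .
\]
Since $H$ is $C^2$ and the resolvent is smooth, uniformly in $\zeta\in\Gamma$, we may differentiate under the integral sign.

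Using $\partial R=R\,(\partial H)\,R$ for $\partial\in\{\partial_z,\partial_{\bar z}\}$, we obtain
\[
\partial_z\partial_{\bar z}R=R H_z R H_{\bar z} R+R H_{\bar z} R H_z R+R H_{z\bar z} R .
\]
Take traces and use cyclicity. The last term gives $\Tr(R^2H_{z\bar z})$, and the first two terms both equal $\Tr(R^2H_zRH_{\bar z})$. At $z=0$ the resolvent $R(\zeta,0)=\operatorname{diag}((\zeta-\lambda_i)^{-1})$ is diagonal, so
\[
\partial_z\partial_{\bar z}\Tr R\big|_{z=0}=\sum_i\frac{(H_{z\bar z})_{ii}}{(\zeta-\lambda_i)^2}+2\sum_{i,j}\frac{(H_z)_{ij}(H_{\bar z})_{ji}}{(\zeta-\lambda_i)^2(\zeta-\lambda_j)} .
\]

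Next we integrate each term against $f(\zeta)$. By Cauchy's formula,
\[
\frac{1}{2\pi i}\oint_\Gamma\frac{f(\zeta)}{(\zeta-\lambda)^2}\dd\zeta=f'(\lambda),
\]
which gives the term $\Tr\bigl(f'(H(0))H_{z\bar z}(0)\bigr)$. The main point is the mixed integral
\[
I(a,b)=\frac{1}{2\pi i}\oint_\Gamma\frac{f(\zeta)}{(\zeta-a)^2(\zeta-b)}\dd\zeta .
\]
This integral is not symmetric in $(a,b)$, so we symmetrize the sum. Because $\lambda_i$ are real, $H(0)$ is diagonal, and $H$ is Hermitian, we have $(H_{\bar z})_{ji}=\overline{(H_z)_{ij}}$. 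The summand weight $w_{ij}=(H_z)_{ij}(H_{\bar z})_{ji}=|(H_z)_{ij}|^2$ is therefore symmetric in $i,j$. Hence
\[
2\sum_{i,j}I(\lambda_i,\lambda_j)w_{ij}=\sum_{i,j}\bigl(I(\lambda_i,\lambda_j)+I(\lambda_j,\lambda_i)\bigr)w_{ij}.
\]
For $a\ne b$, partial fractions give
\[
\frac{1}{(\zeta-a)^2(\zeta-b)}+\frac{1}{(\zeta-b)^2(\zeta-a)}=\frac{1}{a-b}\left(\frac{1}{(\zeta-a)^2}-\frac{1}{(\zeta-b)^2}\right),
\]
so the symmetrized integral equals $\frac{f'(a)-f'(b)}{a-b}=(f')^{[1]}(a,b)$. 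For $a=b$ we get $2\cdot\frac{1}{2\pi i}\oint_\Gamma f(\zeta)(\zeta-a)^{-3}\dd\zeta=f''(a)$. Both cases match the definition of $(f')^{[1]}$, so repeated eigenvalues need no separate treatment.

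The one step that needs care is this symmetrization. Without the Hermitian symmetry $w_{ij}=w_{ji}$, the unsymmetrized kernel $2I(a,b)$ would not equal the divided difference. That symmetry is exactly what produces $(f')^{[1]}$ in \eqref{eq:tracehessian}. The other point to state is the admissibility of $\Gamma$ when the domain of $f$ is disconnected: we take $\Gamma$ to be a union of small circles around the distinct eigenvalues, each circle inside the component of the domain containing its eigenvalue.
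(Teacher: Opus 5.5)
Your contour set-up, the identity $\partial R=R(\partial H)R$, the formula for $\partial_z\partial_{\bar z}R$, the Cauchy evaluation of the $H_{z\bar z}$ term, and the partial-fraction identity are all correct. The genuine gap is in how you combine the two mixed terms: you make two false claims, and they happen to cancel.

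\textbf{First error.} Cyclicity gives $\Tr(RH_{\bar z}RH_zR)=\Tr(R^2H_{\bar z}RH_z)$, not $\Tr(R^2H_zRH_{\bar z})$. The words $R^2H_{\bar z}RH_z$ and $R^2H_zRH_{\bar z}$ are not cyclic permutations of each other. At $z=0$, put $r_i=(\zeta-\lambda_i)^{-1}$ and $w_{ij}=(H_z)_{ij}(H_{\bar z})_{ji}$. Then the two traces are $\sum_{i,j}r_i^2r_jw_{ij}$ and $\sum_{i,j}r_ir_j^2w_{ij}$, which differ in general.

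\textbf{Second error.} You restore the factor $2$ by asserting $w_{ij}=w_{ji}$, but this is also false. Hermiticity gives $H_{\bar z}=H_z^*$, so $w_{ij}=|(H_z)_{ij}|^2$; however, $H_z$ itself is not Hermitian. For $H(z)=\operatorname{diag}(\lambda_1,\lambda_2)+zE_{12}+\bar zE_{21}$ one has $w_{12}=1$ and $w_{21}=0$. In the paper's own application (Proposition~\ref{prop:levi}), $w_{ij}=s_i^2|c_{ij}|^2$, which is why the coefficient $\gamma_{ij}$ there is not symmetric.

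\textbf{Consequence.} Take the example above with $f(u)=u^3$ and $\lambda_1\ne\lambda_2$. Your kernel gives $2I(\lambda_1,\lambda_2)=4\lambda_1+2\lambda_2$. But $\Tr H(z)^3=\lambda_1^3+\lambda_2^3+3(\lambda_1+\lambda_2)|z|^2$, so the true Hessian is $3(\lambda_1+\lambda_2)$. Hence your displayed formula for $\partial_z\partial_{\bar z}\Tr R|_{z=0}$ and your symmetrization equality are both false. The divided difference is not produced by any symmetry of $w$. It comes from the two orderings $H_z\cdots H_{\bar z}$ and $H_{\bar z}\cdots H_z$ in $\partial_z\partial_{\bar z}R$.

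\textbf{Fix.} Do not merge the two terms. Reindex $i\leftrightarrow j$ in the second one to get
\[
\partial_z\partial_{\bar z}\Tr R\big|_{z=0}=\sum_i r_i^2(H_{z\bar z})_{ii}+\sum_{i,j}\bigl(r_i^2r_j+r_ir_j^2\bigr)w_{ij}.
\]
Your partial-fraction identity, together with the case $a=b$, then gives $(f')^{[1]}(\lambda_i,\lambda_j)$ directly, with no symmetry hypothesis on $w$. With this correction your argument is a valid, self-contained alternative to the paper's proof. The paper instead differentiates $\Tr\bigl(f'(H(z))H_z(z)\bigr)$ and quotes the Fr\'echet derivative of $f'$ at a diagonal matrix as the Schur multiplier $(f')^{[1]}(\lambda_i,\lambda_j)$. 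Your resolvent computation in effect re-derives that formula.
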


\begin{proof}
{
By standard analytic functional calculus,
\[
 \partial_z\Tr f(H(z))=\Tr\bigl(f'(H(z))H_z(z)\bigr),
\]
and, at a diagonal matrix $H(0)$, the Fr\'echet derivative of $f'$ satisfies
\[
 \bigl(Df'(H(0))[E]\bigr)_{ij}
 =(f')^{[1]}(\lambda_i,\lambda_j)E_{ij},
\]
with the diagonal case interpreted by $h^{[1]}(a,a)=h'(a)$. Applying the product rule and cyclicity of the trace gives \eqref{eq:tracehessian}.
}
\end{proof}

\begin{proposition}\label{prop:levi}
Let $0<p\le2$, $r=p/2$, and $\varepsilon>0$. For $X,B\in M_n(\C)$, define
\[
 F_\varepsilon(X)=\Tr(X^*X+\varepsilon I)^r.
\]
Then
\begin{equation}\label{eq:levilower}
 \left.\partial_z\partial_{\bar z}F_\varepsilon(X+zB)\right|_{z=0}
 \ge r^2(\norm{X}_\op^2+\varepsilon)^{r-1}\norm{B}_2^2.
\end{equation}
In particular, on an operator-norm ball $\norm{X}_\op\le R$,
\begin{equation}\label{eq:balllevi}
 \left.\partial_z\partial_{\bar z}F_\varepsilon(X+zB)\right|_{z=0}
 \ge r^2(R^2+\varepsilon)^{r-1}\norm{B}_2^2.
\end{equation}
\end{proposition}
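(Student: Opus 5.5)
The plan is to apply Lemma~\ref{lem:trace} to the Hermitian pencil $H(z)=(X+zB)^*(X+zB)+\varepsilon I$ with $f(t)=t^r$, and then compare the two terms of \eqref{eq:tracehessian} entrywise in a suitable basis.

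\emph{Setting up the lemma.} Expanding the pencil gives
\[
 H(z)=X^*X+\varepsilon I+zX^*B+\bar zB^*X+|z|^2B^*B .
\]
Hence
\[
 H_z(0)=X^*B,\qquad H_{\bar z}(0)=B^*X,\qquad H_{z\bar z}(0)=B^*B.
\]
The spectrum of $H(0)$ lies in $[\varepsilon,\infty)$, so $f(t)=t^r$ is analytic on a neighborhood of it. This is exactly where the regularization is used. Both sides of \eqref{eq:levilower} are invariant under $X\mapsto XW$, $B\mapsto BW$ with $W$ unitary. We may therefore assume $|X|=\operatorname{diag}(s_1,\dots,s_n)$, and then $H(0)=\operatorname{diag}(\lambda_i)$ with $\lambda_i=s_i^2+\varepsilon$.

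\emph{Rewriting the two terms.} Take a polar decomposition $X=V|X|$ with $V$ unitary (possible for square matrices), and put $C=V^*B$. Then $X^*B=|X|C$, so
\[
 (H_z)_{ij}(H_{\bar z})_{ji}=|(X^*B)_{ij}|^2=s_i^2|C_{ij}|^2 .
\]
Also $B^*B=C^*C$, which gives $(B^*B)_{jj}=\sum_i|C_{ij}|^2$. Lemma~\ref{lem:trace} therefore yields
\[
 \partial_z\partial_{\bar z}F_\varepsilon(X+zB)\big|_{z=0}
 =\sum_{i,j}|C_{ij}|^2\Bigl[r\lambda_j^{r-1}+r\,(t^{r-1})^{[1]}(\lambda_i,\lambda_j)\,s_i^2\Bigr].
\]

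\emph{Bounding each bracket.} Since $0<r\le1$, the function $t^{r-1}$ is nonincreasing, so its divided difference is $\le0$. Using $s_i^2\le\lambda_i$, we can replace $s_i^2$ by $\lambda_i$ and only decrease the bracket. The bracket is then at least
\[
 r\Bigl[\lambda_j^{r-1}+\lambda_i\frac{\lambda_i^{r-1}-\lambda_j^{r-1}}{\lambda_i-\lambda_j}\Bigr]
 =r\frac{\lambda_i^r-\lambda_j^r}{\lambda_i-\lambda_j}
 =r\,(t^r)^{[1]}(\lambda_i,\lambda_j),
\]
with the diagonal case read as $r\cdot r\lambda_i^{r-1}$. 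By the mean value theorem and the monotonicity of $t^{r-1}$, this is at least $r^2\max(\lambda_i,\lambda_j)^{r-1}$. Since $\max_i\lambda_i=\norm{X}_\op^2+\varepsilon$ and $r-1\le0$, it is in turn at least $r^2(\norm{X}_\op^2+\varepsilon)^{r-1}$.

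\emph{Conclusion.} Summing against $\sum_{i,j}|C_{ij}|^2=\norm{C}_2^2=\norm{B}_2^2$ gives \eqref{eq:levilower}. Then \eqref{eq:balllevi} follows because $(\cdot)^{r-1}$ is nonincreasing and $\norm{X}_\op\le R$.

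The main obstacle is the sign of the mixed term: the divided-difference contribution is negative when $p<2$. The step that must be gotten right is absorbing it into the diagonal term via the identity $b^{r-1}+a\,(t^{r-1})^{[1]}(a,b)=(t^r)^{[1]}(a,b)$. This needs the factor $s_i^2$ to be attached to the correct index, which the polar decomposition $X^*B=|X|C$ arranges. I would double-check the index convention in Lemma~\ref{lem:trace} at that point.
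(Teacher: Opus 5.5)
Your proof is correct and follows essentially the same route as the paper: Lemma~\ref{lem:trace} applied to $(X+zB)^*(X+zB)+\varepsilon I$ with $f(u)=u^r$ gives the same coefficient $\gamma_{ij}$, and the negative divided-difference term is absorbed into $(u^r)^{[1]}(\lambda_i,\lambda_j)\ge rL^{r-1}$. The differences are cosmetic. You use right-unitary invariance plus a polar decomposition where the paper uses a full SVD. You replace $s_i^2$ by $\lambda_i$ where the paper writes $s_i^2=\lambda_i-\varepsilon$ and discards the nonnegative $\varepsilon$-term. You use the mean value theorem where the paper uses the integral formula.
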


\begin{proof}
Take a singular value decomposition $X=UDV^*$, where $D=\operatorname{diag}(s_1,\ldots,s_n)$ with $s_i\ge0$, and put $C=U^*BV=(c_{ij})$. Two-sided unitary invariance reduces the calculation to $D+zC$. Set
\[
 H(z)=(D+zC)^*(D+zC)+\varepsilon I,
 \qquad \lambda_i=s_i^2+\varepsilon.
\]
At zero,
\[
 H_z=DC,\qquad H_{\bar z}=C^*D,\qquad H_{z\bar z}=C^*C.
\]
Lemma~\ref{lem:trace}, applied to $f(u)=u^r$, gives the exact formula
\begin{equation}\label{eq:exactcoeff}
 \left.\partial_z\partial_{\bar z}F_\varepsilon(D+zC)\right|_{z=0}
 =\sum_{i,j=1}^n\gamma_{ij}|c_{ij}|^2,
\end{equation}
where, with continuous interpretation when $\lambda_i=\lambda_j$,
\begin{align}\label{eq:coefficient}
 \gamma_{ij}
 &=r\left[\lambda_j^{r-1}+(\lambda_i-\varepsilon)
 \frac{\lambda_i^{r-1}-\lambda_j^{r-1}}{\lambda_i-\lambda_j}\right]\notag\\
 &=r\left[\frac{\lambda_i^r-\lambda_j^r}{\lambda_i-\lambda_j}
 -\varepsilon\frac{\lambda_i^{r-1}-\lambda_j^{r-1}}{\lambda_i-\lambda_j}\right].
\end{align}
This rearrangement is the useful step. The derivative of $u^{r-1}$ is nonpositive, so the second term inside the last brackets is nonnegative. If $L=\norm{X}_\op^2+\varepsilon$, then $0<\lambda_i,\lambda_j\le L$, and
\[
 \frac{\lambda_i^r-\lambda_j^r}{\lambda_i-\lambda_j}
 =r\int_0^1\bigl((1-s)\lambda_j+s\lambda_i\bigr)^{r-1}\dd s
 \ge rL^{r-1}.
\]
Consequently $\gamma_{ij}\ge r^2L^{r-1}$ for every $i,j$. Summing in \eqref{eq:exactcoeff} and using $\norm{C}_2=\norm{B}_2$ proves \eqref{eq:levilower}. Since $r-1\le0$, replacing $\norm{X}_\op$ by the larger number $R$ gives \eqref{eq:balllevi}.
\end{proof}

\begin{remark}
If $\lambda_i=\lambda_j=\lambda$, the coefficient in \eqref{eq:coefficient} equals
\[
 r\lambda^{r-2}\bigl(r\lambda+(1-r)\varepsilon\bigr)>0.
\]
Thus no division by zero occurs in the proof. If $s_i=s_j=0$, this coefficient is $r\varepsilon^{r-1}$; it may diverge as $\varepsilon\downarrow0$, but the lower bound remains valid. We shall pass to the limit in the function values and their circular means, not in the individual second derivatives.

For a nonzero scalar $a>0$, the real function $t\mapsto|a+t|^p$ has second derivative $p(p-1)a^{p-2}$, which is negative when $0<p<1$. In contrast,
\[
 \partial_z\partial_{\bar z}|a+z|^p\big|_{z=0}
 =\frac{p^2}{4}a^{p-2}>0.
\]
The complex Hessian combines the two real directions. Positivity of this combination is compatible with concavity along an individual real line.
\end{remark}

\section{Circular means and the isometric obstruction}\label{sec:circles}

The complex-curvature approach used here is rooted in the theory of uniform
plurisubharmonic convexity (PL-convexity) developed by Davis, Garling, and Tomczak-Jaegermann
\cite{DGT}. Blower and Ransford subsequently related this geometry to
Riesz measures of trace functions and established $2$-uniform
PL-convexity for Schatten trace ideals in the range $1\le p\le2$
\cite{BR}. In particular, in the trace-class case, for normalized
$A,B\in S_1$, the classical theory yields a circular-mean estimate of the form
\[
\frac{1}{2\pi}\int_0^{2\pi}
\|A+t e^{i\theta}B\|_1\,d\theta
\ge 1+c t^2,
\qquad 0<t<1,
\]
for some constant $c>0$. This quadratic growth along complex directions
provides the geometric starting point for the argument below. By combining
regularization with a trace-Hessian estimate, we retain an explicit
Hilbert--Schmidt contribution and extend the curvature mechanism to the
quasi-Banach range $0<p<1$.

\begin{theorem}\label{thm:circle}
Let $0<p\le2$, $A,B\in\Sp_p(\HH)$, $B\ne0$, and $t>0$. Then \eqref{eq:circleintro} holds. The constant $p^2/4$ cannot be increased in an inequality of this form that is required to hold for every $A,B,t$.
\end{theorem}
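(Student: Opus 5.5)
The plan is to prove \eqref{eq:circleintro} first for matrices, using the regularized functional $F_\varepsilon$ of Proposition~\ref{prop:levi}. Then I would remove the regularization and pass to infinite dimensions by approximation. Sharpness would come from a scalar example.

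\emph{Finite-dimensional step.} Let $\HH=\C^n$ and fix $\varepsilon>0$. Put $u(z)=F_\varepsilon(A+zB)$, a smooth function of $z\in\C$. For $|z|\le t$ we have $\norm{A+zB}_\op\le R:=\norm{A}_\op+t\norm{B}_\op$. Applying \eqref{eq:balllevi} at the base point $X=A+zB$ (the direction is still $B$) gives
\[
\Delta u(z)=4\partial_z\partial_{\bar z}u(z)\ge 4r^2(R^2+\varepsilon)^{r-1}\norm{B}_2^2,\qquad |z|\le t.
\]
For a $C^2$ function on the closed disc, the Riesz/Green identity reads
\[
\mean{u(te^{i\theta})}-u(0)=\frac{1}{2\pi}\int_{|z|<t}\log\frac{t}{|z|}\,\Delta u(z)\,dA(z).
\]
Since $\frac{1}{2\pi}\int_{|z|<t}\log(t/|z|)\,dA=t^2/4$, we get
\[
\mean{u(te^{i\theta})}-u(0)\ge r^2t^2(R^2+\varepsilon)^{r-1}\norm{B}_2^2.
\]
Here $r^2=p^2/4$ and $(R^2)^{r-1}=R^{p-2}$. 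Letting $\varepsilon\downarrow0$, $F_\varepsilon(X)\to\Tr|X|^p$ uniformly on the compact circle, so the inequality passes to the limit and gives \eqref{eq:circleintro}. As the Remark after Proposition~\ref{prop:levi} insists, only function values and circular means are passed to the limit, never second derivatives.

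\emph{Infinite-dimensional step.} For general $A,B\in\Sp_p(\HH)$, take finite-rank orthogonal projections $P_k\uparrow I$ strongly. Set $A_k=P_kAP_k$ and $B_k=P_kBP_k$, which we regard as matrices on $P_k\HH$. For each fixed $\theta$, $\norm{A_k+te^{i\theta}B_k}_p^p\to\norm{A+te^{i\theta}B}_p^p$, because compression converges in $\Sp_p$ (also for $p<1$, via the $p$-triangle inequality $\norm{X+Y}_p^p\le\norm{X}_p^p+\norm{Y}_p^p$). This convergence is uniform in $\theta$, so the circular means converge as well. We also have $\norm{B_k}_2\to\norm{B}_2$, and $\norm{A_k}_\op\le\norm{A}_\op$, $\norm{B_k}_\op\le\norm{B}_\op$. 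Since $2-p\ge0$, the denominator only decreases, so the finite-dimensional bound for $(A_k,B_k)$ passes to the limit.

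\emph{Sharpness.} Take $A=0$ and $B=1$ on a one-dimensional space. Then the left-hand side is $t^p$, while the right-hand side is $c\,t^2/t^{2-p}=c\,t^p$, which already forces $c\le1$. That is too weak, so instead take $A=a>0$ scalar, $B=1$, and $t\to0$. The left-hand side equals $\frac{p^2}{4}a^{p-2}t^2+o(t^2)$, by the expansion $\partial_z\partial_{\bar z}|a+z|^p=\frac{p^2}{4}a^{p-2}$ and the mean value formula. The right-hand side equals $c\,t^2(a+t)^{p-2}$. Dividing by $t^2$ and letting $t\to0$ forces $c\le p^2/4$.

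The main obstacle is the careful justification of the Green-formula step and of uniform convergence in the limits. Smoothness of $u$ is guaranteed for $\varepsilon>0$ because $X^*X+\varepsilon I$ is positive definite, so $u^r$ is analytic near the spectrum. Even so, one must check that the pointwise bound holds on the whole closed disc, which the operator-norm ball bound \eqref{eq:balllevi} provides.
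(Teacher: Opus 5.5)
Your proposal is correct and follows the paper's proof essentially step for step. Your Riesz/Green formula, with $\frac{1}{2\pi}\int_{|z|<t}\log(t/|z|)\,dA=t^2/4$, is equivalent to the paper's sub-mean-value argument for $F_\varepsilon(A+zB)-c_\varepsilon|z|^2$; the $\varepsilon\downarrow0$ limit, the compression argument and the scalar sharpness example (your $A=a$, $B=1$ versus the paper's $A=B=1$ with the binomial series) also match. The one detail to add is that for nonseparable $\HH$ you should first restrict to a separable reducing subspace containing the ranges of $A,B,A^*,B^*$, so that a sequence of finite-rank projections $P_k\uparrow I$ exists.
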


\begin{proof}
We first consider $n\times n$ matrices. Write
\[
 R=\norm{A}_\op+t\norm{B}_\op>0,
 \qquad c_\varepsilon=\frac{p^2}{4}(R^2+\varepsilon)^{p/2-1}\norm{B}_2^2.
\]
For $|z|\le t$, the operator norm triangle inequality gives $\norm{A+zB}_\op\le R$. Proposition~\ref{prop:levi} shows that
\[
 u_\varepsilon(z)=F_\varepsilon(A+zB)-c_\varepsilon|z|^2
\]
has nonnegative Laplacian in the open disc $|z|<t$. It is smooth on a neighborhood of the closed disc. The sub-mean inequality, or the elementary mean-value formula for the Laplacian, yields
\begin{equation}\label{eq:regmean}
 \mean{F_\varepsilon(A+t e^{i\theta}B)}
 \ge F_\varepsilon(A)+c_\varepsilon t^2.
\end{equation}
For completeness, a $C^2$ function $u$ with $\Delta u\ge0$ has increasing circular means: their derivative is $(2\pi s)^{-1}\int_{|z|<s}\Delta u\dd x\dd y\ge0$. Applying this observation to $u_\varepsilon$ proves the asserted inequality directly.

For $0<r\le1$ and $a\ge0$, scalar concavity gives
\[
 0\le(a+\varepsilon)^r-a^r\le\varepsilon^r.
\]
Applied to the eigenvalues of $X^*X$, this implies the uniform estimate
\[
 0\le F_\varepsilon(X)-\norm{X}_p^p\le n\varepsilon^{p/2}.
\]
Letting $\varepsilon\downarrow0$ in \eqref{eq:regmean} therefore proves \eqref{eq:circleintro} in finite dimensions, including singular $A$ and singular pencils.

Now suppose $\HH$ is infinite dimensional. It is enough to work on a separable reducing subspace containing the ranges of $A,B,A^*,B^*$. Choose finite-rank orthogonal projections $P_N$ increasing strongly to the identity on this subspace, and put
\[
 A_N=P_NAP_N,\qquad B_N=P_NBP_N.
\]
The standard ideal estimate $\norm{UXV}_p\le\norm{U}_\op\norm{X}_p\norm{V}_\op$ and density of finite-rank operators give
\begin{equation}\label{eq:compressconv}
 \norm{A_N-A}_p\longrightarrow0,
 \qquad \norm{B_N-B}_p\longrightarrow0.
\end{equation}
To see the density argument explicitly, first approximate $A$ by a fixed finite-rank $F$. The compressions of $A-F$ have quasi-norm at most $\norm{A-F}_p$. Moreover, $P_NFP_N\to F$ in operator norm and the ranks of their differences are bounded, so this convergence also holds in $\Sp_p$. The quasi-triangle inequality completes the argument, and the same reasoning applies to $B$.

The powers $\norm{A_N+zB_N}_p^p$ converge uniformly on $|z|\le t$. For $p\ge1$ this follows from the triangle inequality and continuity of $s\mapsto s^p$ on bounded intervals. For $p<1$, use the Schatten $p$-triangle inequality
\[
 \left|\norm{X}_p^p-\norm{Y}_p^p\right|\le\norm{X-Y}_p^p
\]
and \eqref{eq:compressconv}; these standard trace-ideal facts are discussed in \cite{McCarthy,Simon}. Also $\norm{B_N-B}_2\le\norm{B_N-B}_p\to0$.

The finite-dimensional proof can use the same upper bound $R$ for every compressed pencil, since
\[
 \norm{A_N+zB_N}_\op\le\norm{A}_\op+t\norm{B}_\op=R.
\]
It gives
\[
 \mean{\norm{A_N+t e^{i\theta}B_N}_p^p}-\norm{A_N}_p^p
 \ge\frac{p^2}{4}R^{p-2}t^2\norm{B_N}_2^2.
\]
Pass to the limit in this inequality. This proves \eqref{eq:circleintro} in infinite dimensions, without differentiating an infinite-dimensional trace function at a singular point.

Finally, take one-dimensional matrices $A=B=1$. For $t<1$, the binomial series gives
\[
 \mean{|1+t e^{i\theta}|^p}
 =\sum_{k=0}^\infty\binom{p/2}{k}^{\!2}t^{2k}
 =1+\frac{p^2}{4}t^2+O(t^4).
\]
The right-hand side of the proposed estimate with a constant $C$ in place of $p^2/4$ is $Ct^2(1+t)^{p-2}$. Dividing by $t^2$ and letting $t\downarrow0$ forces $C\le p^2/4$.
\end{proof}

\begin{corollary}\label{cor:flat}
Let $0<p\le2$, $0\ne A\in\Sp_p(\HH)$, and $B\in\Sp_p(\HH)$. If
\[
 \mean{\norm{A+t e^{i\theta}B}_p^p}-\norm{A}_p^p=o(t^2)
 \quad(t\downarrow0),
\]
then $B=0$.
\end{corollary}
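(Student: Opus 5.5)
The plan is to apply Theorem~\ref{thm:circle} directly and let $t\downarrow0$. Fix $0\ne A$ and $B\in\Sp_p(\HH)$, and argue by contradiction: suppose $B\ne0$. For every $t>0$, inequality \eqref{eq:circleintro} gives
\[
 \mean{\norm{A+t e^{i\theta}B}_p^p}-\norm{A}_p^p
 \ge \frac{p^2}{4}\,
 \frac{t^2\norm{B}_2^2}
 {(\norm{A}_\op+t\norm{B}_\op)^{2-p}} .
\]
Dividing by $t^2$, the hypothesis says the left-hand side tends to $0$ as $t\downarrow0$.

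On the right, the denominator tends to $\norm{A}_\op^{2-p}$. This is finite because $A$ is compact, and strictly positive because $A\ne0$. When $p=2$ the denominator is simply $1$, so no positivity of $\norm{A}_\op$ is needed in that case. The hypothesis $A\ne0$ is used exactly here: it keeps the limiting denominator from vanishing when $p<2$.

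Taking $\liminf$ as $t\downarrow0$ then gives
\[
0\ge \frac{p^2}{4}\,\norm{A}_\op^{p-2}\norm{B}_2^2 .
\]
Since $p>0$ and $\norm{A}_\op>0$, this forces $\norm{B}_2=0$. The Hilbert--Schmidt norm is a genuine norm on $\Sp_p\subset\Sp_2$ (using $p\le2$), so $B=0$, contradicting the assumption $B\ne0$.

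There is no real obstacle. The only points to check are that $\norm{B}_2$ is finite and nonzero for $B\ne0$, which follows from $\Sp_p\subset\Sp_2$ for $p\le2$, and the separate handling of the exponent $2-p=0$. Everything substantive is already contained in Theorem~\ref{thm:circle}.
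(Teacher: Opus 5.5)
Your proposal is correct and follows the paper's proof exactly: assume $B\ne0$, divide \eqref{eq:circleintro} by $t^2$, and take the lower limit to obtain the positive bound $(p^2/4)\norm{A}_\op^{p-2}\norm{B}_2^2$, which contradicts the $o(t^2)$ hypothesis. Your extra remarks on the case $p=2$ and on $\Sp_p\subset\Sp_2$ are accurate but only fill in routine details.
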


\begin{proof}
If $B\ne0$, divide \eqref{eq:circleintro} by $t^2$ and take the lower limit. The result is at least $(p^2/4)\norm{A}_\op^{p-2}\norm{B}_2^2>0$, a contradiction.
\end{proof}

\begin{proof}[Proof of Theorem~\ref{thm:main}]
Suppose $T:\ell_q^2\to\Sp_p(\KK)$ is a complex-linear isometry, and put $A=T(1,0)$ and $B=T(0,1)$. Then $\norm{A}_p=\norm{B}_p=1$, and complex linearity gives, for every $\theta$,
\[
 \norm{A+t e^{i\theta}B}_p^p
 =\begin{cases}
 (1+t^q)^{p/q},&q<\infty,\\
 1,&q=\infty,\ 0<t<1.
 \end{cases}
\]
For finite $q>2$, the circular mean increment equals
\[
 \frac pq t^q+O(t^{2q})=o(t^2).
\]
For $q=\infty$ it is zero. In either case Corollary~\ref{cor:flat} implies $B=0$, contradicting $\norm{B}_p=1$.

If $P,Q$ are orthogonal rank-one projections on $\HH$, the map $(a,b)\mapsto aP+bQ$ is an isometry from $\ell_q^2$ into $\Sp_q(\HH)$, also for $q=\infty$. Restricting an alleged Schatten embedding to this span produces the prohibited map $T$. This proves all the assertions.
\end{proof}

\begin{remark}
The identity involving $e^{i\theta}B$ is the only place where the source is used, but it is decisive. A merely real-linear map from $\ell_q^2(\R)$ need not preserve this identity. No conclusion about that different embedding problem is inferred from Theorem~\ref{thm:main}.
\end{remark}

\label{sec:distortion}

The obstruction has a quantitative form when an image direction has bounded rank. This is especially relevant for quasi-Banach targets, where Hilbert--Schmidt control need not give a dimension-independent lower bound in terms of the Schatten $p$ quasi-norm.

\begin{definition}
For an injective complex-linear map $T:\ell_q^2\to\Sp_p(\HH)$, define
\[
 M(T)=\sup_{x\ne0}\frac{\norm{Tx}_p}{\norm{x}_q},\qquad
 m(T)=\inf_{x\ne0}\frac{\norm{Tx}_p}{\norm{x}_q},\qquad
 D(T)=\frac{M(T)}{m(T)}.
\]
The finite-dimensional domain and injectivity imply $0<m(T)\le M(T)<\infty$. These definitions remain valid for $p<1$.
\end{definition}

For $2<q<\infty$, set $\Phi_{p,q}(t)=(1+t^q)^{p/q}$; for $q=\infty$, set $\Phi_{p,\infty}(t)=1$ on $0\le t\le1$.

\begin{theorem}\label{thm:distortion}
Let $0<p\le2$, $2<q\le\infty$, and let $T:\ell_q^2\to\Sp_p(\HH)$ be injective and complex-linear. Suppose that $\rank T(0,1)\le k<\infty$. Then
\begin{equation}\label{eq:distortion}
 D(T)^2\ge
 \sup_{0<t\le1}
 \frac{1+\dfrac{p^2}{4}k^{1-2/p}t^2(1+t)^{p-2}}
 {\Phi_{p,q}(t)}
 >1.
\end{equation}
In particular, the assertion applies with $k=n$ to every injective complex-linear map into $\Sp_p^n$.
\end{theorem}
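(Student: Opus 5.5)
The plan is to run the circular-mean inequality \eqref{eq:circleintro} of Theorem~\ref{thm:circle} on the pencil $A+te^{i\theta}B$, where $A=T(1,0)$ and $B=T(0,1)$. We then bound its two sides using only $M(T)$ and $m(T)$.

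\emph{Normalization.} The quantity $D(T)$ is invariant under replacing $T$ by $\lambda T$ with $\lambda>0$, so we may assume $M(T)=1$. Then $D(T)=1/m$ with $m=m(T)\in(0,1]$. From the definitions, $m\le\norm{A}_p\le1$ and $m\le\norm{B}_p\le1$, and $B\ne0$ by injectivity.

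\emph{Upper bound.} Fix $0<t\le1$. Complex linearity gives $A+te^{i\theta}B=T(1,te^{i\theta})$ for every $\theta$. Since $\norm{(1,te^{i\theta})}_q^p=\Phi_{p,q}(t)$ (for $q=\infty$ this uses $t\le1$), we get
\[
\mean{\norm{A+te^{i\theta}B}_p^p}\le M(T)^p\,\Phi_{p,q}(t)=\Phi_{p,q}(t).
\]

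\emph{Lower bound.} Apply \eqref{eq:circleintro} and control each factor separately.
\begin{itemize}
\item[(a)] We have $\norm{A}_p^p\ge m^p$.
\item[(b)] The operator norm is dominated by the $p$-quasi-norm, so $\norm{A}_\op+t\norm{B}_\op\le1+t$. Because $2-p\ge0$, the denominator is therefore at most $(1+t)^{2-p}$.
\item[(c)] This is the step where the rank hypothesis enters, and it is the one needing care. Since $B$ has at most $k$ nonzero singular values and $p\le2$, the power-mean (H\"older) inequality on $k$ numbers gives $\norm{B}_p\le k^{1/p-1/2}\norm{B}_2$. Hence $\norm{B}_2^2\ge k^{1-2/p}\norm{B}_p^2\ge k^{1-2/p}m^2$.
\end{itemize}
Combining (a)--(c),
\[
\Phi_{p,q}(t)\ge m^p+\frac{p^2}{4}k^{1-2/p}t^2(1+t)^{p-2}m^2 .
\]
Since $m\le1$ and $p\le2$, we have $m^p\ge m^2$. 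The right-hand side is therefore at least $m^2\bigl(1+\tfrac{p^2}{4}k^{1-2/p}t^2(1+t)^{p-2}\bigr)$. Dividing by $m^2\Phi_{p,q}(t)$ and taking the supremum over $t\in(0,1]$ gives \eqref{eq:distortion}.

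\emph{Strict inequality.} The main obstacle is the mixture of homogeneities, namely $p$-th powers on the norms against squares on the Hilbert--Schmidt term. The normalization $M(T)=1$ together with $m^p\ge m^2$ is designed to resolve exactly this. The only remaining point is that the supremum exceeds $1$. For $q=\infty$ we have $\Phi_{p,\infty}\equiv1$, so any $t$ works. For $2<q<\infty$, $\Phi_{p,q}(t)=1+\tfrac pq t^q+O(t^{2q})$, while the numerator is $1+ct^2+O(t^3)$ with $c>0$. Since $q>2$, the ratio exceeds $1$ for all sufficiently small $t$.

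For $\Sp_p^n$, every operator has rank at most $n$, so $k=n$ is admissible.
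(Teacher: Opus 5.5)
Your proof is correct and follows the paper's argument essentially step by step. You normalize $M(T)=1$, bound the circular mean above by $\Phi_{p,q}(t)$, and bound it below via Theorem~\ref{thm:circle} using $\norm{A}_\op+t\norm{B}_\op\le 1+t$ and the power-mean comparison $\norm{B}_2^2\ge k^{1-2/p}\norm{B}_p^2$; you then absorb the mismatched homogeneity through $m^p\ge m^2$ (the paper's $D^{2-p}\ge1$, since you work with $m(T)=1/D(T)$) and obtain strictness from the $t^2$ versus $t^q$ comparison, exactly as the paper does.
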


\begin{proof}
Multiplying $T$ by a positive scalar, we may assume $M(T)=1$. Write $D=D(T)$, $A=T(1,0)$, and $B=T(0,1)$. We have
\[
 D^{-1}\le\norm{A}_p,\norm{B}_p\le1,
 \qquad \norm{A}_\op,\norm{B}_\op\le1.
\]
The power-mean inequality for the at most $k$ nonzero singular values of $B$ gives
\begin{equation}\label{eq:rankcomparison}
 \norm{B}_2^2\ge k^{1-2/p}\norm{B}_p^2
 \ge k^{1-2/p}D^{-2}.
\end{equation}
The circular mean on the left of \eqref{eq:circleintro}, before subtracting the central value, is at most $\Phi_{p,q}(t)$. Since $p-2\le0$ and $\norm{A}_\op+t\norm{B}_\op\le1+t$, Theorem~\ref{thm:circle} implies
\[
 \Phi_{p,q}(t)\ge D^{-p}
 +\frac{p^2}{4}(1+t)^{p-2}t^2k^{1-2/p}D^{-2}.
\]
Multiply by $D^2$ and use $D^{2-p}\ge1$. This proves the non-strict inequality in \eqref{eq:distortion}.

If $q<\infty$, the numerator is $1+(p^2/4)k^{1-2/p}t^2+O(t^3)$, whereas the denominator is $1+O(t^q)$. Their ratio is greater than one for sufficiently small $t>0$. If $q=\infty$, the denominator is one and every $t>0$ gives strict inequality.
\end{proof}

\begin{corollary}\label{cor:gap}
Put $a=p^2 2^{p-4}k^{1-2/p}$. For $2<q<\infty$, define
\[
 \tau=\min\left\{1,\left(\frac{aq}{2p}\right)^{1/(q-2)}\right\}.
\]
Under the hypotheses of Theorem~\ref{thm:distortion},
\[
 D(T)\ge\sqrt{1+\frac{a\tau^2}{3}}>1.
\]
For $q=\infty$, one has $D(T)\ge\sqrt{1+a}$.
\end{corollary}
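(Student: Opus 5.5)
We derive Corollary~\ref{cor:gap} from Theorem~\ref{thm:distortion} by evaluating the supremum in \eqref{eq:distortion} at one well-chosen $t\in(0,1]$, namely $t=\tau$ for $q<\infty$ and $t=1$ for $q=\infty$.

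First we simplify the numerator. On $0<t\le1$ we have $(1+t)^{p-2}\ge 2^{p-2}$, since $p-2\le0$. Hence
\[
 1+\frac{p^2}{4}k^{1-2/p}t^2(1+t)^{p-2}\ge 1+p^2 2^{p-4}k^{1-2/p}t^2=1+at^2 .
\]
For $q=\infty$ the denominator equals $1$. Taking $t=1$ gives $D(T)^2\ge 1+a$, which is the claimed bound.

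For $2<q<\infty$ we bound the denominator. For $p\le q$, Bernoulli's inequality (or concavity of $x\mapsto(1+x)^{p/q}$) gives $\Phi_{p,q}(t)=(1+t^q)^{p/q}\le 1+\frac pq t^q$. The definition of $\tau$ forces $\tau^{q-2}\le \frac{aq}{2p}$, so
\[
 \frac pq\tau^q=\frac pq\tau^{q-2}\tau^2\le \frac a2\tau^2 .
\]
Therefore
\[
 D(T)^2\ge\frac{1+a\tau^2}{1+\frac a2\tau^2}=1+\frac{\frac a2\tau^2}{1+\frac a2\tau^2}.
\]

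It remains to show that this last quantity is at least $1+a\tau^2/3$. That reduces to $1+\frac a2\tau^2\le\frac32$, i.e.\ $a\tau^2\le1$. Since $\tau\le1$, it suffices to know $a\le1$. Here $k\ge1$ and $1-2/p\le0$, so $k^{1-2/p}\le1$. Also $p^2 2^{p-4}$ is increasing in $p$ on $(0,2]$, and its value at $p=2$ is $4\cdot2^{-2}=1$. Hence $a\le1$.

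Strict positivity of the final bound is immediate from $a>0$ and $\tau>0$. The only step that needs care is the final comparison, where the constant $1/3$ is exactly what $a\tau^2\le1$ permits. Everything else is routine monotonicity.
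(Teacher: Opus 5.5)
Your proof is correct and follows the paper's argument step for step. You evaluate the supremum in \eqref{eq:distortion} at $t=\tau$ (at $t=1$ when $q=\infty$), bound the numerator below by $1+at^2$ using $(1+t)^{p-2}\ge 2^{p-2}$, bound $\Phi_{p,q}(\tau)$ above by $1+\frac a2\tau^2$ via concavity and the choice of $\tau$, and close with $a\tau^2\le1$. Your explicit check that $a\le 1$ (monotonicity of $p^2 2^{p-4}$ on $(0,2]$ together with $k^{1-2/p}\le 1$) supplies a detail the paper only asserts.
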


\begin{proof}
For $0<t\le1$, $(1+t)^{p-2}\ge2^{p-2}$, so the numerator in \eqref{eq:distortion} is at least $1+at^2$. Since $0<p/q<1$, concavity gives
\[
 (1+t^q)^{p/q}\le1+(p/q)t^q.
\]
The definition of $\tau$ ensures $(p/q)\tau^q\le a\tau^2/2$. Also $0<a\le1$. Hence
\[
 D(T)^2\ge\frac{1+a\tau^2}{1+a\tau^2/2}
 =1+\frac{a\tau^2}{2+a\tau^2}
 \ge1+\frac{a\tau^2}{3}.
\]
When $q=\infty$, take $t=1$ directly in \eqref{eq:distortion}.
\end{proof}

\begin{proposition}\label{prop:hilbert}
For $2<q\le\infty$, the least distortion of a complex-linear embedding of $\ell_q^2$ into a Hilbert space of dimension at least two is
\[
 2^{1/2-1/q},
\]
with $1/\infty=0$. Theorem~\ref{thm:distortion} recovers this exact value when $p=2$.
\end{proposition}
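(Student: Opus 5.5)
The plan is to prove the upper and lower bounds separately, both directly in the Hilbert space. Afterwards I will check that the supremum in \eqref{eq:distortion} with $p=2$ equals the same number.

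\emph{Upper bound.} Let $e_1,e_2$ be orthonormal vectors in the Hilbert space and put $T(a,b)=ae_1+be_2$. Then $\norm{T(a,b)}=\norm{(a,b)}_2$. On $\C^2$ the comparisons
\[
\norm{x}_q\le\norm{x}_2\le 2^{1/2-1/q}\norm{x}_q
\]
hold: the first because $q>2$, the second by H\"older's inequality. Hence $M(T)\le 2^{1/2-1/q}$ and $m(T)\ge1$. So the least distortion is at most $2^{1/2-1/q}$, with $1/\infty=0$.

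\emph{Lower bound.} Let $T$ be an injective complex-linear map into a Hilbert space. Normalize so that $M(T)=1$, and put $A=T(1,0)$, $B=T(0,1)$ and $D=D(T)$. Then $\norm{A},\norm{B}\ge D^{-1}$. In a Hilbert space the circular mean is exact, because the cross term $2\operatorname{Re}(e^{-i\theta}\langle A,B\rangle)$ averages to zero:
\[
\mean{\norm{A+e^{i\theta}B}^2}=\norm{A}^2+\norm{B}^2\ge 2D^{-2}.
\]
Complex linearity gives $\norm{(1,e^{i\theta})}_q=2^{1/q}$ for every $\theta$, so the left side is at most $2^{2/q}$. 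Therefore $D^2\ge 2^{1-2/q}$, which is the exact value; for $q=\infty$ it reads $D^2\ge 2$. (Equivalently, one can identify the span of $A,B$ with a subspace of $\Sp_2$ and take $t=1$ in \eqref{eq:circleintro} with $p=2$, where the inequality is an identity.)

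\emph{Recovery from Theorem~\ref{thm:distortion}.} This is the step I expect to need the most care.

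First, the rank hypothesis has to be removed. For $p=2$ the factor $k^{1-2/p}=1$, and \eqref{eq:rankcomparison} becomes the trivial identity $\norm{B}_2=\norm{B}_p$. So the proof of Theorem~\ref{thm:distortion} does not use the rank bound, and the estimate holds for every $T$ into $\Sp_2$. Any Hilbert space embeds isometrically into $\Sp_2$ via $x\mapsto x\otimes e$, so the estimate also covers every Hilbert target.

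Second, with $p=2$ the numerator in \eqref{eq:distortion} is $1+t^2$ and $\Phi_{2,q}(t)=(1+t^q)^{2/q}$. I will show the quotient is increasing on $(0,1]$. Write
\[
g(t)=\tfrac12\log(1+t^2)-\tfrac1q\log(1+t^q).
\]
Then
\[
g'(t)=\frac{t\,(1-t^{q-2})}{(1+t^2)(1+t^q)}\ge0 \qquad (0<t\le1),
\]
since $q>2$. For $q=\infty$ the quotient $1+t^2$ is clearly increasing.

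The supremum is therefore attained at $t=1$, where the quotient equals $2^{1-2/q}$. This bound is matched by the identity embedding above, which proves both assertions of the proposition.
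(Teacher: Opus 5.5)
Your proof is correct and follows essentially the same route as the paper. The upper bound comes from the identity embedding with the H\"older comparison, and the lower bound from the exact Hilbert-space circular-mean identity (equivalently \eqref{eq:distortion} with $p=2$, where the rank factor is $1$) evaluated at $t=1$; your monotonicity computation for $(1+t^2)/(1+t^q)^{2/q}$ is a small extra check that the supremum in \eqref{eq:distortion} is attained at $t=1$, whereas the paper simply takes $t=1$, which already suffices because the identity embedding prevents the supremum from exceeding $2^{1-2/q}$.
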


\begin{proof}
When $p=2$, the rank term in \eqref{eq:distortion} equals one. Choosing $t=1$ gives
\[
 D(T)^2\ge\frac{2}{2^{2/q}}.
\]
This lower bound does not require a rank assumption: for a Hilbert target it follows directly from the identity
\[
 \mean{\norm{A+t e^{i\theta}B}_2^2}
 =\norm{A}_2^2+t^2\norm{B}_2^2
\]
and the normalization used in the preceding proof. Conversely, the identity map from $\C^2$ with the $\ell_q$ norm to $\C^2$ with the Euclidean norm satisfies
\[
 \norm{x}_q\le\norm{x}_2\le2^{1/2-1/q}\norm{x}_q,
\]
and both constants are attained. This gives the matching upper bound.
\end{proof}

For $p<2$, we do not claim the rank-dependent lower bound is optimal. Its purpose is to turn an exact obstruction into an explicit separation from isometry for each fixed rank. In particular, it excludes a sequence of distortions tending to one when all image directions under consideration have a common finite rank bound. At $p=1$, classical uniform complex convexity gives stronger, dimension-independent information.

\section{Circular expansions at a rank drop}\label{sec:expansion}

The circular lower bound does not by itself exclude $\ell_1^2$ from a target with $p<1$: linear growth is compatible with positive quadratic growth. Finite-dimensional holomorphic pencils possess additional structure. Their singular values vanish to integer orders, and angular averaging removes the odd corrections within each group of a fixed vanishing order.

\begin{lemma}\label{lem:smith}
Let $M(z)$ be a holomorphic $n\times n$ matrix near zero, of rank $s$ over the field of meromorphic germs. There are integers $0\le k_1\le\cdots\le k_s$ and holomorphic invertible matrices $E(z),G(z)$ near zero such that
\begin{equation}\label{eq:smith}
 M(z)=E(z)\operatorname{diag}(z^{k_1},\ldots,z^{k_s},0,\ldots,0)G(z).
\end{equation}
In a sufficiently small punctured disc, the nonzero singular values, arranged decreasingly, satisfy
\begin{equation}\label{eq:orders}
 c|z|^{k_j}\le s_j(M(z))\le C|z|^{k_j},\qquad 1\le j\le s,
\end{equation}
for constants $c,C>0$ independent of $\arg z$.
\end{lemma}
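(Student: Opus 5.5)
The plan is to obtain \eqref{eq:smith} from the Smith normal form over the local ring $\mathcal O_0$ of convergent power series at $0$, and then to read off \eqref{eq:orders} from minimax estimates for singular values under two-sided multiplication by invertible matrices.

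\textbf{Step 1: the local Smith form.} The ring $\mathcal O_0$ is a discrete valuation ring: every nonzero germ has the form $z^k u(z)$ with $u(0)\ne0$. In particular it is a principal ideal domain, so the Smith normal form theorem applies to $M(z)$. It gives invertible matrices $E,G$ over $\mathcal O_0$ and invariant factors $d_1\mid d_2\mid\cdots\mid d_s$ with
\[
M=E\operatorname{diag}(d_1,\ldots,d_s,0,\ldots,0)G.
\]
Here $s$ equals the rank over the fraction field of $\mathcal O_0$, that is, over meromorphic germs. Write $d_j=z^{k_j}u_j$ with $u_j$ a unit. The divisibility chain gives $k_1\le\cdots\le k_s$. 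The unit factors $u_j$ can be absorbed into $E$, by multiplying its $j$-th column by $u_j$. All these germs converge on a common small disc, and $E,G$ remain invertible there, since $\det E,\det G$ are units and hence nonvanishing near $0$.

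\textbf{Step 2: the singular-value bounds.} Shrink the disc to a closed disc on which
\[
\norm{E}_\op,\ \norm{E^{-1}}_\op,\ \norm{G}_\op,\ \norm{G^{-1}}_\op\le K,
\]
which is possible by continuity. For any matrices $X,Y,Z$, the minimax principle gives
\[
s_j(XYZ)\le\norm{X}_\op\norm{Z}_\op\,s_j(Y).
\]
Applying this to $M=E\Delta G$ and to $\Delta=E^{-1}MG^{-1}$, where $\Delta=\operatorname{diag}(z^{k_1},\ldots,z^{k_s},0,\ldots,0)$, yields
\[
K^{-2}s_j(\Delta)\le s_j(M)\le K^2 s_j(\Delta).
\]
For $0<|z|<1$, the numbers $|z|^{k_1}\ge\cdots\ge|z|^{k_s}$ are already in decreasing order, because $k_j$ is increasing. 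So $s_j(\Delta)=|z|^{k_j}$ for $j\le s$, and $s_j(\Delta)=0$ for $j>s$. This gives \eqref{eq:orders} with $c=K^{-2}$ and $C=K^2$, and these constants do not depend on $\arg z$. It also shows that exactly $s$ singular values are nonzero on the punctured disc.

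\textbf{Main obstacle.} The only delicate point is justifying the Smith form over the analytic local ring, rather than over a polynomial ring. I would cite the PID property and the standard Smith theorem, as in \cite{KM}, or sketch the elimination argument directly. In that argument one repeatedly moves an entry of minimal vanishing order to position $(1,1)$. Because this entry divides every other entry in $\mathcal O_0$, it can clear its row and column using invertible elementary operations, and one then inducts on the remaining block. The rest of the proof is routine.
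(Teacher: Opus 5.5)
Your proposal is correct and follows essentially the same route as the paper, which cites the local Smith form and then compares the singular values of $M(z)$ with those of $\operatorname{diag}(z^{k_1},\ldots,z^{k_s},0,\ldots,0)$ using uniform bounds on $E^{\pm1},G^{\pm1}$, exactly as in your Step~2. Your justification of the Smith form via the discrete valuation ring $\mathcal O_0$ (absorbing the units $u_j$ into $E$) and your explicit constants $c=K^{-2}$, $C=K^2$ supply details the paper leaves to the reference.
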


\begin{proof}
{
The factorization \eqref{eq:smith} is the standard local Smith form for a holomorphic matrix pencil; see, for example, \cite{KM}. We therefore only record the consequence needed below. Shrinking the disc if necessary, the matrices $E,G,E^{-1},G^{-1}$ are uniformly bounded in operator norm. Hence the standard singular-value inequalities for multiplication by bounded invertible matrices compare the nonzero singular values of $M(z)$ with those of
\[
 \operatorname{diag}(z^{k_1},\ldots,z^{k_s},0,\ldots,0),
\]
uniformly for small $z$. Since $0\le k_1\le\cdots\le k_s$, this gives \eqref{eq:orders} for $|z|<1$.
}
\end{proof}

\begin{lemma}\label{lem:radial}
Let $M(z)$ be a holomorphic matrix near zero and let $p>0$. For sufficiently small $t>0$,
\begin{equation}\label{eq:radialdecomp}
 \mean{\Tr|M(t e^{i\theta})|^p}
 =\sum_{k\in J}t^{kp}G_k(t),
\end{equation}
where $J$ is the finite set of vanishing orders occurring in Lemma~\ref{lem:smith}. Each $G_k$ extends to an even real-analytic function near $t=0$ and satisfies $G_k(0)>0$. If $M$ is identically zero, the sum is empty.
\end{lemma}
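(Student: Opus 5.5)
The plan is to reduce the trace to eigenvalues of a positive matrix built from the local Smith form, separate those eigenvalues into clusters according to vanishing order, rescale each cluster to size one, and then average over $\theta$. Throughout I write $z=te^{i\theta}$ and work in the punctured disc of Lemma~\ref{lem:smith}, so that $M(z)=E(z)Z(z)G(z)$ with $Z(z)=\operatorname{diag}(z^{k_1},\ldots,z^{k_s},0,\ldots,0)$. If $M\equiv0$ there is nothing to prove, so assume $s\ge1$.

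\textbf{Reduction and scaling.} Since $\Tr|M|^p=\Tr(M^*M)^{p/2}$ only involves the nonzero eigenvalues of $M^*M$, I pass to a matrix that is real-analytic in $(t,\theta)$. Put $W=E^*E$ and $Q=GG^*$; both are positive definite and real-analytic in $(x,y)$, hence in $(t,\theta)$, for small $|t|$, including negative $t$. The nonzero eigenvalues of $M^*M$ coincide with those of $W^{1/2}ZQZ^*W^{1/2}$. Factor $Z=S_tU_\theta$, where $S_t=\operatorname{diag}(t^{k_j})$ and $U_\theta=\operatorname{diag}(e^{ik_j\theta})$ is unitary on the first $s$ coordinates. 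Group the indices $1,\ldots,s$ into blocks $I_k$, one for each $k\in J$, and write $\tilde Q=U_\theta QU_\theta^*$, which is positive definite on the first $s$ coordinates. The problem is then to find the eigenvalues of the $s\times s$ matrix $K(t,\theta)=S_t\tilde Q S_t\widetilde W$, where $\widetilde W$ is the compression of $W$ to the first $s$ coordinates, or rather a Schur-complement-corrected version of it, since the zero block of $Z$ kills the remaining coordinates. Up to similarity, $K$ is a Hermitian positive matrix $S_t\Theta(t,\theta)S_t$ with $\Theta$ positive definite and real-analytic; for instance $\Theta=\widetilde W^{1/2}\tilde Q\,\widetilde W^{1/2}$ after moving the factors appropriately.

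\textbf{Cluster separation (the main obstacle).} I expect the hardest step to be showing that the eigenvalues of $S_t\Theta S_t$ of order $t^{2k}$ are exactly $t^{2k}$ times the eigenvalues of a matrix $\Lambda_k(t,\theta)$ that is jointly real-analytic and satisfies $\Lambda_k(0,\theta)>0$ uniformly in $\theta$. Rellich's theorem applied separately for each $\theta$ gives analyticity in $t$ only, not joint analyticity or uniformity, and the eigenvalues inside a cluster may cross; this is why I avoid labelling individual eigenvalues. The first thing I would try is a block $LDL^*$ (iterated Schur complement) factorization of $\Theta$ with respect to the ordered blocks $I_{k}$, $k\in J$ increasing. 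This writes $S_t\Theta S_t=\mathcal{L}\,\operatorname{diag}(t^{2k}\Lambda_k)\,\mathcal{L}^*$. Here $\Lambda_k$ is the Schur complement of $\Theta$ on the block $I_k$ relative to the earlier blocks, and $\mathcal{L}$ is block unit lower triangular with off-diagonal blocks multiplied by nonnegative powers $t^{k'-k}$. One then has to check that $\mathcal{L}$ is real-analytic and tends to a matrix that does not mix the scales. Granting this, for each $k$ one scales by $t^{-2k}$ and takes a Riesz projection around a fixed contour enclosing the spectrum of $\Lambda_k(0,\theta)$ and avoiding $0$; uniform positivity makes this possible for all small $|t|$ and all $\theta$. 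The outcome is a jointly real-analytic function
\[
 g_k(t,\theta)=\frac{1}{2\pi i}\oint\zeta^{p/2}\Tr\bigl(\zeta-t^{-2k}K(t,\theta)\bigr)^{-1}P_k\,d\zeta ,
\]
with $g_k(0,\theta)=\Tr\Lambda_k(0,\theta)^{p/2}>0$ and $\Tr|M(te^{i\theta})|^p=\sum_{k\in J}|t|^{kp}g_k(t,\theta)$.

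\textbf{Averaging and parity.} Define $G_k(t)=\frac1{2\pi}\int_0^{2\pi}g_k(t,\theta)\,d\theta$. This is real-analytic near $0$ because $g_k$ is jointly analytic and uniformly bounded on a complex neighbourhood. It satisfies $G_k(0)>0$, since it is the average of a continuous positive function of $\theta$. For evenness, note that the point $(t,\theta)$ and the point $(-t,\theta+\pi)$ give the same $z$. Since $|t|^{kp}$ is even in $t$, the cluster functions satisfy $g_k(-t,\theta)=g_k(t,\theta+\pi)$, and averaging over $\theta$ gives $G_k(-t)=G_k(t)$. This yields \eqref{eq:radialdecomp} with $J$ the set of distinct $k_j$.
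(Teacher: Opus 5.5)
Your outline (group the eigenvalues of $M^*M$ by vanishing order, rescale each group, show the rescaled cluster sums $g_k$ are jointly real-analytic, average in $\theta$, get evenness from $(t,\theta)\mapsto(-t,\theta+\pi)$) is the paper's. The gap is in the step you yourself flag as the main obstacle: you do not carry it out, and the tool you propose cannot do the job. The block $LDL^*$ factorization gives $S_t\Theta S_t=\mathcal L\operatorname{diag}(t^{2k}\Lambda_k)\mathcal L^*$. This is a \emph{congruence}, not a similarity, so even with $\mathcal L\to I$ it does not make the order-$2k$ eigenvalues equal to $t^{2k}$ times those of $\Lambda_k$. Take $k_1=0$, $k_2=1$ and constant $\Theta=\begin{pmatrix}1&a\\ \bar a&1\end{pmatrix}$ with $0<|a|<1$, so $\Lambda_1=1-|a|^2$. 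The small eigenvalue of $S_t\Theta S_t=\begin{pmatrix}1&at\\ \bar at&t^2\end{pmatrix}$ is not $t^2(1-|a|^2)$: the determinant would force the other eigenvalue to be $1$, and then the trace would be $1+t^2(1-|a|^2)\ne1+t^2$. So your contour formula only defines $g_k$ for $t\ne0$. When some order smaller than $k$ occurs, $t^{-2k}K(t,\theta)$ is unbounded as $t\to0$, and its resolvent says nothing about analyticity at $t=0$, which is precisely the content of the lemma.

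There is also a slip earlier: $\Theta=\widetilde W^{1/2}\tilde Q\widetilde W^{1/2}$ is wrong, because $\widetilde W^{1/2}$ does not commute with $S_t$. Two remarks fix this. First, $\widetilde W$ is just the compression, since $ZQZ^*W$ is block upper triangular; no Schur-complement correction is needed. Second, a lower-triangular Cholesky factor $\widetilde W=LL^*$ gives the form you want. Indeed $S_tL=\tilde LS_t$ with $\tilde L=S_tLS_t^{-1}$ real-analytic and invertible at $t=0$, so $K$ is similar to $S_t\tilde L^*\tilde Q\tilde L\,S_t$.

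The paper avoids the difficulty by deflating bounded real-analytic matrices through unitary similarities. The positive cluster of $M^*M$ at $t=0$ is separated from $0$, so its Riesz projection and an orthonormal frame for its range are real-analytic, and the block reduction preserves eigenvalues exactly. The complementary block is positive semidefinite with norm $O(t^2)$, hence equals $t^2$ times a real-analytic matrix, and the same step is reapplied.

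If you want to keep your explicit $S_t\Theta S_t$ structure, put the Schur complement into the characteristic determinant rather than into $\Theta$. Let $\mathcal K=t^{-2k}S_t\Theta S_t$, let $\alpha$ be the indices of order $<k$ and $\beta$ the rest. Then $\zeta-\mathcal K_{\alpha\alpha}=-t^{-2k}S_\alpha NS_\alpha$, where $N=\Theta_{\alpha\alpha}-\zeta t^{2k}S_\alpha^{-2}$ is analytic and invertible near $t=0$, and
\[
 \det(\zeta-\mathcal K)=\det(\zeta-\mathcal K_{\alpha\alpha})\,
 \det\Bigl(\zeta-t^{-2k}S_\beta\bigl(\Theta_{\beta\beta}-\Theta_{\beta\alpha}N^{-1}\Theta_{\alpha\beta}\bigr)S_\beta\Bigr).
\]
The second factor is analytic in $(t,\theta,\zeta)$ up to $t=0$, where it becomes $\det\bigl(\zeta-(\Lambda_k(0,\theta)\oplus0)\bigr)$. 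The first factor has no zeros inside the contour for small $t$, because $\mathcal K_{\alpha\alpha}\ge ct^{-2}$. Integrating $\zeta^{p/2}$ against the logarithmic derivative of the second factor then gives a jointly real-analytic $g_k$ with $g_k(0,\theta)=\Tr\Lambda_k(0,\theta)^{p/2}$. After that, your averaging and parity arguments go through.
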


\begin{proof}
Set $H(t,\theta)=M(t e^{i\theta})^*M(t e^{i\theta})$. Its entries are real-analytic in the real variables $(t,\theta)$, including negative $t$. Its nonzero eigenvalues of order $k$ are uniformly comparable to $|t|^{2k}$, by \eqref{eq:orders}. We describe carefully why the eigenvalues can be grouped analytically without choosing analytic individual eigenvectors at a crossing.

First separate the eigenvalues of $H(0,\theta)$ that are positive from zero. On a neighborhood of any fixed $\theta_0$, the corresponding spectral projection is real-analytic in $(t,\theta)$: it is the contour integral of $(\zeta I-H(t,\theta))^{-1}$ around that positive spectral cluster. A real-analytic orthonormal frame for its range is available locally. Indeed, if $W$ is a fixed orthonormal frame at $(0,\theta_0)$ and $P$ is the varying projection, the columns of
\[
 PW(W^*PW)^{-1/2}
\]
form such a frame nearby. The same construction on the complementary projection gives a real-analytic unitary block reduction of $H$ into a positive block $K_0$ and a residual block $R_0$.

The eigenvalues of $R_0$ are the remaining eigenvalues of $H$. Their largest is $O(t^2)$, uniformly on a smaller angular neighborhood. As $R_0$ is positive semidefinite for real $t$, its operator norm is $O(t^2)$; consequently every entry and its first $t$-derivative vanish at $t=0$. Real analyticity implies an exact factorization $R_0=t^2H_1$ with real-analytic $H_1$. By continuity, $H_1$ remains positive semidefinite at $t=0$.

The eigenvalues of $H_1$ associated with $k_j=1$ are bounded above and below by positive constants as $t\to0$. Those with $k_j>1$ tend to zero. Thus its positive spectral cluster at $t=0$ has a fixed rank, separated uniformly from zero; repeat the same block reduction. After removing the positive block $K_1$, the residual has norm $O(t^2)$ and is again divisible by $t^2$.

Continuing in this way, and skipping empty positive clusters, yields for each $k\in J$ a positive definite real-analytic block $K_k(t,\theta)$. The eigenvalues of $t^{2k}K_k(t,\theta)$ are exactly the eigenvalues of $H$ of order $2k$. After $\max J$ stages, all remaining eigenvalues are identically zero. This last assertion follows from \eqref{eq:smith}, which fixes the rank on the punctured disc; a Hermitian residual with all eigenvalues zero is the zero matrix. The construction also applies if the initial positive cluster is empty.

Put $g_k(t,\theta)=\Tr K_k(t,\theta)^{p/2}$. Positive definiteness makes $g_k$ real-analytic, and $g_k(0,\theta)>0$. The intervals $[c^2|t|^{2k},C^2|t|^{2k}]$ for distinct $k$ are disjoint after decreasing $\delta$. Although frames were constructed locally, the scalar functions therefore agree on overlaps: for $t\ne0$, they are the sum of the $p/2$ powers of the eigenvalues in the uniquely specified order-$2k$ cluster, divided by $|t|^{kp}$. Equality at zero follows by continuity. Compactness of the angular circle gives a common interval $|t|<\delta$, on which the functions and their analytic expansions are uniform in $\theta$. It follows that
\[
 G_k(t)=\mean{g_k(t,\theta)}
\]
is real-analytic. Equivalently, this follows by integrating their locally convergent Taylor series, whose convergence radii can be chosen uniformly using a finite cover of the circle.

Finally, $H(-t,\theta)=H(t,\theta+\pi)$. Since this identity preserves each order cluster, it gives
\[
 g_k(-t,\theta)=g_k(t,\theta+\pi).
\]
Integration in $\theta$ proves $G_k(-t)=G_k(t)$. The exact decomposition of the eigenvalues gives \eqref{eq:radialdecomp}, and positivity gives $G_k(0)>0$.
\end{proof}

\begin{proposition}\label{prop:loworder}
Let $p>0$, $A\ne0$, and $B\in M_n(\C)$. There are a finite set $J_+\subset\N$, positive numbers $a_k$ for $k\in J_+$, and $b\ge0$ such that
\begin{equation}\label{eq:loworder}
 \mean{\norm{A+t e^{i\theta}B}_p^p}
 =\norm{A}_p^p+\sum_{\substack{k\in J_+\\kp\le2}}a_k t^{kp}
 +bt^2+o(t^2).
\end{equation}
Moreover, in singular-value coordinates $A=\operatorname{diag}(s_1,\ldots,s_h,0,\ldots,0)$ with $s_i>0$, the coefficient $b$ vanishes if and only if $B$ is supported entirely in the bottom-right $(n-h)\times(n-h)$ block.
\end{proposition}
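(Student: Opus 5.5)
We would apply Lemma~\ref{lem:radial} to the pencil $M(z)=A+zB$ and then read off the coefficient of $t^2$ in the order-zero group by a direct second-order calculation based on Lemma~\ref{lem:trace}. By unitary invariance we may work throughout in singular-value coordinates: $A=D=\operatorname{diag}(s_1,\ldots,s_h,0,\ldots,0)$ with $s_i>0$, and $B$ replaced by $C=(c_{ij})$ as in the proof of Proposition~\ref{prop:levi}. Since $M(0)=A\ne0$, the vanishing order $0$ occurs in Lemma~\ref{lem:smith} exactly $h=\rank A$ times. The order-zero cluster therefore consists of the eigenvalues of $H(z)=M(z)^*M(z)$ near the positive eigenvalues $s_i^2$ of $A^*A$.

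\emph{Step 1: bookkeeping of the expansion.} Lemma~\ref{lem:radial} gives the decomposition
\[
 \mean{\norm{A+te^{i\theta}B}_p^p}=G_0(t)+\sum_{k\in J,\,k\ge1}t^{kp}G_k(t),
\]
with each $G_k$ even, real-analytic and $G_k(0)>0$. For $k\ge1$ with $kp\le2$ we write $t^{kp}G_k(t)=G_k(0)t^{kp}+O(t^{kp+2})$. The error is $o(t^2)$ because $kp>0$. We then set $J_+=\{k\in J:k\ge1\}$ and $a_k=G_k(0)>0$. Terms with $kp>2$ are $o(t^2)$. Evenness gives $G_0(t)=G_0(0)+bt^2+O(t^4)$ with $G_0(0)=\norm{A}_p^p$. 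Thus \eqref{eq:loworder} holds, and it remains to identify $b$ and show $b\ge0$.

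\emph{Step 2: identifying $G_0$ with a smooth trace function.} Let $f$ equal $u^{r}$, $r=p/2$, on a complex neighborhood of $\{s_1^2,\ldots,s_h^2\}$, and equal $0$ on a small disc around $0$. This neighborhood is disconnected, which Lemma~\ref{lem:trace} explicitly permits. For small $|z|$ the spectrum of $H(z)$ stays inside these two components. By the cluster characterization in the proof of Lemma~\ref{lem:radial}, the function $\Phi(z)=\Tr f(H(z))$ is precisely the sum of the $p/2$ powers of the order-zero eigenvalues. Hence $G_0(t)=\mean{\Phi(te^{i\theta})}$. Since $\Phi$ is smooth (indeed real-analytic) in $(x,y)$, averaging its Taylor expansion over the circle gives
\[
 G_0(t)=\Phi(0)+t^2\,\partial_z\partial_{\bar z}\Phi(0)+O(t^4),
\]
so $b=\partial_z\partial_{\bar z}\Phi(0)$. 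This identification is the step I expect to need the most care. One must check that the contour-integral definition of $\Phi$ agrees with the order-zero group $K_0$ from Lemma~\ref{lem:radial}. That follows because, for small $t$, both are the eigenvalues lying in the component near the positive spectrum of $A^*A$.

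\emph{Step 3: the sign computation.} As in Proposition~\ref{prop:levi} (now with $\varepsilon=0$), at $z=0$ we have $H_z=DC$, $H_{\bar z}=C^*D$ and $H_{z\bar z}=C^*C$. Lemma~\ref{lem:trace} then gives $b=\sum_{i,j}\gamma_{ij}|c_{ij}|^2$, where
\[
 \gamma_{ij}=f'(\lambda_j)+s_i^2\,(f')^{[1]}(\lambda_i,\lambda_j),\qquad \lambda_i=s_i^2 .
\]
We evaluate $\gamma_{ij}$ by cases:
\begin{itemize}
\item If $i,j\le h$, then $\gamma_{ij}=r(\lambda_i^r-\lambda_j^r)/(\lambda_i-\lambda_j)>0$, by the rearrangement in \eqref{eq:coefficient} with $\varepsilon=0$.
\item If $i\le h<j$, then $f'(\lambda_j)=f'(0)=0$ and $s_i^2(f')^{[1]}(\lambda_i,0)=r\lambda_i^{r-1}>0$.
\item If $i>h\ge j$, then $s_i=0$ and $\gamma_{ij}=f'(\lambda_j)=r\lambda_j^{r-1}>0$.
\item If $i,j>h$, then $\gamma_{ij}=0$.
\end{itemize}
Hence $b\ge0$. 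Moreover, $b=0$ exactly when $c_{ij}=0$ for every $(i,j)$ outside the bottom-right $(n-h)\times(n-h)$ block, which is the asserted characterization.
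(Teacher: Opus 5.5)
Your proposal is correct and follows the paper's proof essentially step for step. You use Lemma~\ref{lem:radial} on $A+zB$ for the bookkeeping, identify the order-zero group with $\Tr f\bigl((A+zB)^*(A+zB)\bigr)$ for an $f$ equal to $u^{p/2}$ near the positive spectrum and $0$ near zero, and compute $b=\partial_z\partial_{\bar z}$ of this function via Lemma~\ref{lem:trace}, and your case-by-case coefficients $\gamma_{ij}$ reproduce exactly the paper's formula \eqref{eq:regularcoefficient}: divided differences of $u^{p/2}$ on the top-left block, $\tfrac p2 s_i^{p-2}$ on the two off-diagonal blocks, and zero on the kernel-to-kernel block.
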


\begin{proof}
Apply Lemma~\ref{lem:radial} to $M(z)=A+zB$. For $k\ge1$, even analyticity gives $G_k(t)=a_k+O(t^2)$ with $a_k>0$. The correction $t^{kp}O(t^2)$ is $o(t^2)$, and terms with $kp>2$ can also be absorbed into the remainder. The order-zero group gives
\[
 G_0(t)=\norm{A}_p^p+bt^2+O(t^4).
\]
It remains to determine the sign and zero set of $b$.

Let $v(z)$ be the sum of the $p/2$ powers of the eigenvalues of $(A+zB)^*(A+zB)$ that are positive at $z=0$. This is a real-analytic function near zero, and $b=\partial_z\partial_{\bar z}v(0)$. Put $\alpha=p/2$ and $\lambda_i=s_i^2$ for $i\le h$. Choose a scalar function $f$ analytic on disjoint neighborhoods of the positive spectrum and of zero, equal to $u^\alpha$ on the former and to zero on the latter. Then $v(z)=\Tr f((A+zB)^*(A+zB))$ near zero. Lemma~\ref{lem:trace} applies and yields
\begin{align}\label{eq:regularcoefficient}
 b={}&\alpha\sum_{i,j\le h}
 \frac{\lambda_i^\alpha-\lambda_j^\alpha}{\lambda_i-\lambda_j}|b_{ij}|^2\notag\\
 &+\alpha\sum_{i\le h<j}s_i^{p-2}
 \bigl(|b_{ij}|^2+|b_{ji}|^2\bigr),
\end{align}
where the quotient on the diagonal is $\alpha\lambda_i^{\alpha-1}$. Every coefficient displayed is strictly positive for every $p>0$. The kernel-to-kernel entries have coefficient zero, since $f$ is identically zero near zero. This proves $b\ge0$ and the assertion about equality. Two-sided unitary changes of coordinates preserve the argument.
\end{proof}

\begin{remark}
The conclusion is more precise than subharmonicity. Terms $t^{kp+1}$ can occur before integration in $\theta$, but their angular integrals vanish. All corrections to a leading singular contribution then have order at least $kp+2>2$. Therefore, below and at order two, only the positive leading singular coefficients and the nonnegative regular curvature survive. No positivity assertion is made for higher-order coefficients.
\end{remark}

\begin{proof}[Proof of Theorem~\ref{thm:finiteintro}]
For $q=p$, the diagonal map gives the required isometry. For $q=2$, use a matrix whose first row is $(a,b,0,\ldots,0)$ and whose other rows vanish. These are the positive cases.

Conversely, suppose $T:\ell_q^2\to\Sp_p^n$ is an isometry and write $A=T(1,0)$, $B=T(0,1)$. Both images have Schatten $p$ quasi-norm one. If $q<\infty$, the circular mean is
\begin{equation}\label{eq:sourceexpansion}
 (1+t^q)^{p/q}
 =1+\frac pq t^q+\frac{p(p-q)}{2q^2}t^{2q}+O(t^{3q}).
\end{equation}
For $q=\infty$, it is one when $0<t<1$.

Suppose first that $1<q<\infty$ and $q\ne2$. If $q<2$, the expansion through order two of \eqref{eq:sourceexpansion} is $1+(p/q)t^q+o(t^2)$. If $q>2$, it is $1+o(t^2)$. In both cases the coefficient of $t^2$ is zero. Comparing with \eqref{eq:loworder}, whose displayed coefficients are all nonnegative, shows that
\[
 b+\sum_{\substack{k\in J_+\\kp=2}}a_k=0,
\]
and hence $b=0$. This comparison uses only uniqueness of a finite expansion in distinct real powers: subtract the two expansions, divide by the smallest power with a nonzero coefficient, and let $t\downarrow0$; repeat for the remaining powers.

By Proposition~\ref{prop:loworder}, $B$ is supported on the complementary left and right kernels of $A$. Thus the pencil is block diagonal in singular-value coordinates, and
\[
 \norm{A+zB}_p^p=\norm{A}_p^p+|z|^p\norm{B}_p^p=1+|z|^p.
\]
Its equality with $(1+|z|^q)^{p/q}$ forces $p=q$, by comparison of the first nonconstant powers at zero. The same reasoning for $q=\infty$ gives $b=0$ and then contradicts the asserted constant mean.

It remains to consider $0<q\le1$. If $p>q$, every exponent $kp$ in \eqref{eq:loworder} is larger than $q$, as is the regular exponent two. The circular mean increment is then $o(t^q)$, contradicting \eqref{eq:sourceexpansion}. If $p<q$, the coefficient of $t^{2q}$ in \eqref{eq:sourceexpansion} is strictly negative and $2q\le2$. The expansion \eqref{eq:loworder}, truncated at order $2q$, has only nonnegative coefficients; uniqueness of finite real-power expansions again gives a contradiction. Thus $p=q$ in this final case as well. Restriction to two diagonal matrix units proves the assertion for Schatten source spaces.
\end{proof}

In particular, at $q=1$ and $0<p<1$ the decisive contradiction is
\[
 (1+t)^p=1+pt-\frac{p(1-p)}2t^2+o(t^2).
\]
If no vanishing order satisfies $kp=1$, the linear term is already impossible. At the exceptional exponents $p=1/k$, that term is allowed, but its next possible circular correction has order three, not two. The negative quadratic coefficient therefore remains impossible. This directly addresses the reciprocal-exponent obstruction left by a real-parameter expansion.

\section{Consequences for Problem 3.1 in \cite{CPS}}

The applications below refer to the numbering in \cite{CPS}, version~2. They are consequences of the theorems proved here, not assertions that all the listed questions have been resolved.

{ A closely related two-dimensional phenomenon was established by Hein\"avaara \cite{Heinavaara2024}, who proved that every two-dimensional real subspace of $S_3$ is linearly isometric to a subspace of the commutative space $L_3$. Whereas that result concerns the commutative representation of planes contained in $S_3$, the problem considered here concerns complex-linear isometric embeddings of finite-dimensional Schatten-$3$ spaces into trace-class targets. Our finite-dimensional classification rules out such embeddings; in particular, $\Sp_3^m\not\hookrightarrow\Sp_1^n$ for $m\ge2$, giving a negative answer to the corresponding case in \cite{CPS}.}

\begin{corollary}\label{cor:answers}
Let $m\ge2$ and $n<\infty$.
\begin{enumerate}
\item There is no complex-linear isometric embedding $\Sp_3^m\to\Sp_1^n$.
\item For every $q>2$ and $0<p<1$, there is no complex-linear isometric embedding $\Sp_q^m\to\Sp_p^n$.
\item For every $0<p<1$, including every $p=1/k$ with integer $k\ge2$, neither $\Sp_1^m$ nor $\Sp_\infty^m$ admits a complex-linear isometric embedding into $\Sp_p^n$.
\item If $\dim\HH\ge2$, then $\Sp_q(\HH)$ does not embed complex-linearly isometrically into $\Sp_p(\KK)$ for $2<q\le\infty$ and $0<p<1$, or for $2<q<4$ and $p=1$.
\end{enumerate}
\end{corollary}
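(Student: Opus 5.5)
The plan is to read off each item of Corollary~\ref{cor:answers} from Theorem~\ref{thm:main} or Theorem~\ref{thm:finiteintro}, after reducing to a two-dimensional source. The reduction is the one used in both proofs. Fix orthogonal rank-one projections $P,Q$ (in $\Sp_q^m$ with $m\ge2$, or in $\Sp_q(\HH)$ with $\dim\HH\ge2$). The map $(a,b)\mapsto aP+bQ$ is a complex-linear isometry of $\ell_q^2$ into the source, for every $0<q\le\infty$, since the singular values of $aP+bQ$ are $|a|,|b|$. Composing an alleged embedding with this map would therefore give a complex-linear isometry of $\ell_q^2$ into the target. So it suffices to rule out $\ell_q^2\hookrightarrow$ target for the indicated exponents.

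For item~(1), take $q=3$ and $p=1$. Theorem~\ref{thm:finiteintro} applies directly because $p\ne q$ and $q\ne2$. Alternatively, Theorem~\ref{thm:main} applies, since $p=1\le2<3=q$. For item~(2), the parameters $0<p<1$ and $q>2$ (including $q=\infty$) lie in the range $0<p\le2<q\le\infty$ of Theorem~\ref{thm:main}, whose finite-dimensional clause is exactly the statement. Theorem~\ref{thm:finiteintro} also covers this, since $q\neq p$ and $q\neq2$.

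For item~(3), the source exponent is $q=1$ or $q=\infty$ and $0<p<1$. In both cases $q\ne p$ and $q\ne2$, so Theorem~\ref{thm:finiteintro} excludes $\ell_q^2\hookrightarrow\Sp_p^n$ for every $n\ge2$. For $n=1$ the target is one-dimensional and cannot contain a two-dimensional image. I will stress that the reciprocal exponents $p=1/k$ need no separate treatment. In the proof of Theorem~\ref{thm:finiteintro} the case $q=1>p$ is handled by the strictly negative $t^{2q}=t^2$ coefficient of $(1+t)^p$, which the nonnegative expansion of Proposition~\ref{prop:loworder} cannot match, even when some $kp=1$. The case $q=\infty$ is handled by $b=0$ together with the block-diagonal computation $1+|z|^p\not\equiv1$.

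For item~(4), the infinite-dimensional statement is the last clause of Theorem~\ref{thm:main}. The ranges $2<q\le\infty$ with $0<p<1$, and $2<q<4$ with $p=1$, both sit inside $0<p\le2<q\le\infty$. The restriction $q<4$ at $p=1$ only records which part of Problem~3.1(v) is addressed, and is not needed by the argument. There is no real obstacle here. The only point requiring care is checking that each parameter choice meets the hypotheses of the cited theorem, in particular $n\ge2$ and $q\ne2$ in Theorem~\ref{thm:finiteintro}, and that the diagonal restriction is isometric for $q=\infty$.
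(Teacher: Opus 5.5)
Your proposal is correct and follows the paper's proof. Like the paper, you restrict an alleged embedding to the diagonal span $aP+bQ$ of two orthogonal rank-one projections, then invoke Theorem~\ref{thm:finiteintro} for the finite-dimensional items and Theorem~\ref{thm:main} for item~(4); your remark that $n=1$ is trivial (a two-dimensional space cannot embed injectively into $\Sp_p^1$) usefully covers the gap between the hypothesis $n\ge2$ of Theorem~\ref{thm:finiteintro} and the corollary's $n<\infty$, which the paper leaves implicit.
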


\begin{proof}
Apply Theorem~\ref{thm:finiteintro} for the finite-dimensional assertions and Theorem~\ref{thm:main} for the infinite-dimensional assertion. A Schatten source contains the diagonal span of two orthogonal rank-one projections; neither $m\le n$ nor any additional structural hypothesis on the map is needed for the negative conclusion.
\end{proof}

The results obtained in this paper provide several consequences for the Schatten
embedding problems considered in \cite{CPS}. In particular, they give complete
negative answers for the cases $\Sp_3^m\not\hookrightarrow\Sp_1^n$, the
reciprocal-exponent obstructions for $\Sp_1^m$ and $\Sp_\infty^m$, and the whole
range $q>2$, $0<p<1$. Combining complex curvature estimates with
finite-dimensional holomorphic expansions, we obtain a complete classification
of the isometric embeddings $\ell_q^2(\mathbb C)\hookrightarrow\Sp_p^n$.
The trace-class case is consistent with classical complex convexity theory,
while the Hilbertian dimension problem and the remaining infinite-dimensional
parameter ranges are beyond the scope of the present work.

	\section*{Acknowledgements}
%====================================================================
Thanks to all the members of the Functional Analysis Research team of the College of
Mathematics and Statistics of Anqing Normal University for their discussion and correction of the difficulties and errors encountered in this paper.

\end{document}